\setlist[enumerate,1]{label={(\roman*)}}
\setlist[enumerate,2]{label={(\alph*)}}
\setlist[enumerate,3]{label={(\Roman*)}}
\newcommand{\newsstheorem}[2]{
  \newaliascnt{#1}{dummy}
  \newtheorem{#1}[#1]{#2}
  \aliascntresetthe{#1}
  \expandafter\def\csname #1autorefname\endcsname{#2}
}
\theoremstyle{plain}
\theoremstyle{definition}
\theoremstyle{remark}
\newenvironment{proofthm}[1]{\par
  \pushQED{\qed}%
  \normalfont \topsep6\p@\@plus6\p@\relax
  \trivlist
  \item[\hskip\labelsep
        \bfseries\sffamily
    #1\@addpunct{.}]\ignorespaces
}{%
  \popQED\endtrivlist\@endpefalse
}
\newenvironment{eqnarr}{\begin{IEEEeqnarray}{rCl}}{\end{IEEEeqnarray}\ignorespacesafterend}
\newenvironment{eqnarr*}{\begin{IEEEeqnarray*}{rCl}}{\end{IEEEeqnarray*}\ignorespacesafterend}
\providecommand{\email}[1]{\href{mailto:#1}{\nolinkurl{#1}}}
\newcommand\RR{\mathbb{R}}
\newcommand\Zz{\mathbf{Z}}
\newcommand\Pp{\mathcal{P}}
\newcommand\PP{\mathbb{P}}
\newcommand\EE{\mathbb{E}}
\newcommand\Indic[1]{\Ind_{\{#1\}}}
\newcommand\Ind{\mathbbm{1}}
\newcommand\from{\colon}
\newcommand\NN{\mathbb{N}}
\newcommand\FF{\mathscr{F}}
\newcommand\Uu{\mathcal{U}}
\newcommand\Gg{\mathcal{G}}
\newcommand\Ee{\mathcal{E}}
\newcommand\pp{\mathbf{p}}
\newcommand\e{\mathrm{e}}
\newcommand\Hh{\mathcal{H}}
\newcommand\Zzu{\Zz^{(u)}}
\newcommand{\crochet}[1]{{\langle #1 \rangle}}
\newcommand\mathof[1]{{\operator@font#1}} \makeatother
\newcommand\dd{\mathof{d}}
\DeclareMathOperator{\argmin}{argmin}
\DeclarePairedDelimiter\abs{\lvert}{\rvert}
\DeclarePairedDelimiterX\ip[2]{\langle}{\rangle}{#1,#2}
\title{The strong Malthusian behavior of growth-fragmentation processes}
\author{Jean Bertoin\footnote{Institute of Mathematics, University of Zurich, Switzerland}  \and Alexander R. Watson\footnote{School of Mathematics, University of Manchester, UK}}
\date{\textsc{Draft}, \today}
\begin{document}

\maketitle 

\begin{abstract}
  Growth-fragmentation processes describe the evolution of systems
  of cells which grow continuously and fragment suddenly; they 
  are used in models of cell division and protein polymerisation.
  Typically, we may expect that in the long run, the concentrations of cells with given masses
  increase at some exponential rate, and that, after compensating for this,
  they arrive at an asymptotic profile.
  Up to now, this question has mainly been studied for the average
  behavior of the system, often by means of a natural
  partial integro-differential equation and the associated spectral
  theory. However, the behavior of the system as a whole, rather
  than only its average, is more delicate.
  In this work, we show that a criterion found by one of the authors
  for exponential ergodicity on average is actually sufficient to
  deduce stronger results about the convergence of the entire
  collection of cells to a certain asymptotic profile,
  and we find some improved explicit conditions for this to occur.
\end{abstract}

{\small 
\textit{Keywords:}
Growth-fragmentation process, Malthus behavior, intrinsic martingale, branching process.\newline
\textit{2010 Mathematics Subject Classification:}
35Q92, % PDEs in biology
37A30, % Ergodic theorems, spectral theory, Markov operators
47D06, % One-parameter semigroups and linear evolution equations
60G46,  % Martingales and classical analysis
60J80. % Branching processes (Galton-Watson, birth-and-death, etc.)
}

\section{Introduction}
This work is concerned with the large time asymptotic behavior of a class of branching Markov processes in continuous time, which we call {\em growth-fragmentation processes}. These may be used  to model the evolution of a population, for instance of bacteria,
in which an individual reproduces by fission into two or more new
individuals.

Each individual
grows continuously, with the growth depending
deterministically on the current mass of the individual, up to
a random instant at which fission occurs.
This individual, which may be thought of as a mother,
is then replaced by a family of new individuals, referred to as her
daughters. We assume that mass is preserved at fission, meaning
that the mass of the mother immediately before the division
is equal to the sum of the masses of her daughters immediately afterwards.
The time at which the fission occurs and the masses of her daughters
at fission are both random, and depend on the mass of the mother
individual.
After a fission event, the daughters are in turn viewed as mothers of
future generations,
and evolve according to the same dynamics,
independently of the other individuals.

Mathematically, we represent this as
a process in continuous time, $\Zz=(\Zz_t, t\geq 0)$,
with values in the space of point measures on $(0,\infty)$.
Each individual is represented as an atom in $\Zz_t$,
whose location is the individual's mass.
That is, if at time $t$ there are $n\in \NN \cup\{\infty\}$ individuals
present, with masses $z_1,z_2,\dotsc$, then
$\Zz_t = \sum_{i=1}^n \delta_{z_i}$, with $\delta_z$ the Dirac delta
at $z \in (0,\infty)$.

Growth-fragmentation processes are members of the family of structured population models, which were first studied using analytic methods in the framework of linear integro-differential equations. To demonstrate this connection,
consider the \emph{intensity measure} $\mu_t$ of $\Zz_t$, defined by 
$\crochet{\mu_t,f} = \EE[\crochet{ \Zz_t,f}]$ for all $f\in{\mathcal C}_c$. That is, $f$ is a continuous function on $(0,\infty)$ with compact support, and  the notation $\crochet{m,f} = \int f \, \dd m$ is used for the integral of a function $f$ against a Radon measure $m$ on $(0,\infty)$, whenever this makes sense.
In words, $\mu_t(A)$  describes the concentration of individuals at time $t$ with masses in the set $A\subset (0,\infty)$, and, informally, the evolution of the branching Markov process $\Zz$ entails that 
the family $(\mu_t)_{t\geq 0}$ solves an evolution equation
(see \cite{EN00} for background) of the form
\begin{equation}\label{E:GFE}
\frac{\dd }{\dd t} \crochet{\mu_t ,f}=  \crochet{ \mu_t, {\mathcal A} f}, \qquad f\in{\mathcal C}_c^1,
\end{equation}
where the infinitesimal generator
\[
  {\mathcal A}f(x)
  =
  c(x) f'(x) + {B}(x) \int_{\Pp} \left( \sum_{i=1}^{\infty}f(xp_i)-f(x)\right)\kappa(x, \dd \pp)
\]
is naturally associated to the dynamics of $\Zz$
and $f$ is a smooth function in the domain of ${\mathcal A}$.
The meaning of this operator will be described precisely later,
when we derive it in equation \eqref{E:GFG}.
Briefly, $c\colon (0,\infty)\to (0,\infty)$ is a continuous function
representing the growth rate, $B\colon (0,\infty) \to [0,\infty)$ is a
bounded measurable function representing the fission rate,
and $\kappa$ a measurable probability kernel describing
the relative masses of the daughters obtained at fission.
That is, an individual of mass $x$ grows at rate $c(x)$,
experiences fission at rate $B(x)$ and, if fission occurs,
then the relative masses of the daughters are drawn from
the distribution $\kappa(x,\cdot)$.
We shall refer to \eqref{E:GFE}  as the {\em growth-fragmentation equation}. 

A fundamental problem in this analytic setting is to determine explicit conditions on the parameters governing the evolution of the system that ensure the so-called (asynchronous)  {\em Malthusian behavior}:    for all $f\in{\mathcal C}_c$, 
\begin{equation} \label{E:Malthus}
 \EE[\crochet{ \Zz_t,f}]=\crochet{\mu_t,f} \sim  \e^{\lambda t}\crochet{\mu_0,h } \crochet{\nu,f}\qquad \text{as }t \to \infty,
 \end{equation}
where  $\lambda \in \RR$, $h$ is positive function, and $\nu$ a Radon measure  on $(0,\infty)$ with $\crochet{\nu,h }=1$. 
When \eqref{E:Malthus} holds, we call $\lambda$ the
\emph{Malthus exponent} and $\nu$ the \emph{asymptotic profile}.
There exists a vast literature on this topic, and we content ourselves here to cite  a few  contributions  \cite{BerGab2, CalvoDoumicPerthame, DDGW, Esco} amongst the most recent ones, in which many further references can be found.

  Spectral analysis of the infinitesimal generator ${\mathcal A}$  often plays a key role for establishing \eqref{E:Malthus}. Indeed, if 
 there exist $\lambda \in \RR$, a positive function $h$ and a Radon measure $\nu$ that solve the eigenproblem
 \begin{equation} \label{E:eigenp}
 {\mathcal A}h =\lambda h \ ,\   {\mathcal A}'\nu =\lambda \nu\ ,\
 \crochet{\nu,h }=1,
 \end{equation}
 with $\mathcal{A}'$ the adjoint operator to $\mathcal{A}$,
 then \eqref{E:Malthus} follows rather directly.  In this direction, the Perron-Frobenius paradigm, and more specifically 
the Krein-Rutman theorem (which requires compactness of certain operators related to ${\mathcal A}$) yield
a powerful framework for establishing the existence of solutions to the eigenproblem \eqref{E:eigenp}. This method
has been widely used in the literature; see, for instance, \cite{Per07, BCG-fine, DoumGab, MS16}.
Then $\lambda$ arises as the leading eigenvalue of ${\mathcal A}$, i.e., the eigenvalue with the maximal real part, and 
$h$ and $\nu$ respectively as a corresponding positive eigenfunction and dual eigenmeasure.

A stochastic approach  for establishing \eqref{E:Malthus}, which is based on the Feynman-Kac formula and circumvents spectral theory, has been
developed by the authors in \cite{BW, BeFK} and Cavalli in \cite{Cavalli}.
To carry out this programme, we introduce, under the assumption
$  \sup_{x>0} c(x)/x<\infty$,  the unique
strong Markov process $X$ on $(0,\infty)$ with generator
\[ \mathcal{G} f (x)
  = \frac{1}{x} \mathcal{A}\bar{f}(x) - \frac{c(x)}{x} f(x), \]
where $\bar{f}(x) = x f(x)$. 
Assume that $X$ is irreducible and aperiodic, and 
define the Feynman-Kac weight
$$\mathcal{E}_t = \exp\left(\int_0^t \frac{c(X_s)}{X_s} \, \dd s\right),$$
and the Laplace transform 
$$L_{x,y}(q) 
= \EE_x[ \e^{- qH(y)} \mathcal{E}_{H(y)} \Indic{H(y)<\infty}], $$
where $H(y)=\inf\{t>0: X_t=y\}$ denotes the first hitting time of $y$ by $X$.
A weaker version of Theorem 1.2 in \cite{BeFK}  (see also Theorem 1.1 in \cite{BW}) can then be stated as follows.
\setcounter{dummy}{-1} % Start counters from 0.
\begin{theorem}\label{t:0}
  Fix $x_0 > 0$. Define
  \[ \lambda = \inf\{q\in \RR: L_{x_0,x_0}(q) < 1\}. \]
  The value of $\lambda \in\RR$ and is independent of $x_0$.
  If
  \begin{equation} \label{E:main}
    \limsup _{x\to 0+} \frac{c(x)}{x}< \lambda \quad\text{and}\quad  \limsup _{x\to \infty} \frac{c(x)}{x}<\lambda,
  \end{equation}
  then the Malthusian behavior \eqref{E:Malthus} holds (so $\lambda$ is the Malthus exponent)
  with 
  $$h(x) = xL_{x,x_0}(\lambda) \quad \text{and} \quad
  \nu(\dd y) = \frac{\dd y}{h(y)c(y) \lvert L'_{y,y}(\lambda)\rvert}.$$
\end{theorem}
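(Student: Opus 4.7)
The plan is to reduce \eqref{E:Malthus} to the long-time asymptotics of the Feynman–Kac semigroup $T_t g(x) := \EE_x[\Ee_t\, g(X_t)]$ of the auxiliary Markov process $X$, and then to extract these asymptotics by an excursion/renewal analysis at the regular point $x_0$. The first step is the many-to-one identity
\[
\EE_x[\crochet{\Zz_t, f}] \;=\; x\, \EE_x\!\left[\Ee_t\, \frac{f(X_t)}{X_t}\right],
\]
which follows from checking that both sides satisfy the same linear evolution equation with the same initial data: by construction, $\Gg$ is obtained from $\Aa$ by the change of variables $\bar f(x)=xf(x)$ together with subtraction of the mass-creation potential $c(x)/x$, so inserting $\Ee_t$ exactly undoes this subtraction. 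It then suffices to prove $\e^{-\lambda t}T_t g(x) \to L_{x,x_0}(\lambda)\crochet{\nu,f}$ for $g(y):=f(y)/y$, after which multiplying by $x$ recovers \eqref{E:Malthus} with $h(x)=xL_{x,x_0}(\lambda)$.

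Next I would analyse $\lambda$ itself. The function $q \mapsto L_{x_0,x_0}(q)$ is nonincreasing, log-convex, and continuous where finite (by dominated convergence and Hölder applied to the definition of $L$), so the threshold $\lambda$ is well-defined in $\RR\cup\{-\infty\}$; irreducibility and aperiodicity of $X$ yield independence from $x_0$ via comparison of excursions through another base point. Setting $g^\ast(x):=L_{x,x_0}(\lambda)$, the strong Markov property on short time intervals shows $\e^{-\lambda t}T_t g^\ast(x)=g^\ast(x)$ for $x\neq x_0$, identifying $g^\ast$ as $\lambda$-invariant for the Feynman–Kac semigroup killed on return to $x_0$; equivalently, $h(x)=xg^\ast(x)$ solves $\Aa h=\lambda h$, giving the stated formula for $h$.

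The core of the proof is then a renewal argument. At $q=\lambda$ the random variable $\Ee_{H(x_0)}\e^{-\lambda H(x_0)}$ has $\PP_{x_0}$-mean $1$, so it is the density of a probability measure under which successive returns of $X$ to $x_0$ form a genuine renewal process with mean inter-arrival time $\lvert L'_{x_0,x_0}(\lambda)\rvert$. Decomposing $\e^{-\lambda t}T_t g(x_0)$ along these returns gives a renewal equation with driving function
\[
z(s) := \e^{-\lambda s}\,\EE_{x_0}\!\bigl[\Ee_s g(X_s)\,\Indic{s<H(x_0)}\bigr],
\]
and the key renewal theorem then yields
\[
\e^{-\lambda t}T_t g(x_0) \;\longrightarrow\; \frac{1}{\lvert L'_{x_0,x_0}(\lambda)\rvert}\int_0^\infty z(s)\,\dd s.
\]
A Fubini argument rewrites this integral as the total occupation measure of an excursion from $x_0$ to $x_0$ under the $\lambda$-tilted law; a further strong Markov decomposition at $H(y)$ and differentiation of $L_{y,y}$ at $\lambda$ identify the occupation density at each $y>0$ as a multiple of $1/[c(y)\lvert L'_{y,y}(\lambda)\rvert]$, and collecting factors with $g(y)=f(y)/y$ and $h(y)=yL_{y,x_0}(\lambda)$ produces exactly $\crochet{\nu,f}$ for the $\nu$ in the theorem. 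A strong Markov decomposition at $H(x_0)$ handles a general starting point, producing the prefactor $L_{x,x_0}(\lambda)=h(x)/x$ and hence $\crochet{\mu_0,h}$ in \eqref{E:Malthus}.

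The hard part will be the direct Riemann integrability of $z$ required for the key renewal theorem to deliver a finite, explicit limit: the weight $\Ee_s$ along a deep excursion can be very large, and without boundary control $z$ need not even be integrable. This is precisely the role of \eqref{E:main}: the two-sided bounds $c(x)/x\le\lambda-\varepsilon$ near $0$ and near $\infty$ force $\int_0^s c(X_u)/X_u\,\dd u$ to grow strictly slower than $\lambda s$ whenever $X$ is far from $x_0$, so the $\lambda$-discount in $z(s)$ dominates on long excursions and $z$ decays exponentially, while aperiodicity of $X$ supplies the non-lattice hypothesis needed for Blackwell's theorem. Weakening either half of \eqref{E:main} would force a finer analysis of the behaviour of $X$ near the relevant boundary.
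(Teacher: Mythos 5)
Your outline is essentially the argument of the works from which this theorem is quoted: the paper itself does not prove \autoref{t:0} but imports it from \cite{BW,BeFK}, where the proof runs exactly as you describe --- many-to-one/Feynman--Kac representation via the auxiliary process $X$, definition of $\lambda$ through $L_{x_0,x_0}$, and identification of the limit through the regenerative structure at returns to $x_0$ (phrased there as ergodicity of the tilted, point-recurrent process rather than literally as the key renewal theorem, with \eqref{E:main} supplying finiteness of $L_{x_0,x_0}$ to the left of $\lambda$ and hence exponential recurrence). The one step you assert rather than derive is the normalisation $L_{x_0,x_0}(\lambda)=1$, which does not follow from the definition of $\lambda$ alone and, as in Section 3.5 of \cite{BeFK}, requires showing that \eqref{E:main} forces $L_{x_0,x_0}(q)<\infty$ for some $q<\lambda$.
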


Indeed, in \cite{BeFK}, it was even shown that \eqref{E:main} implies
that \eqref{E:Malthus} occurs at exponential rate.
\autoref{t:0} will form the basis of our work,
the purpose of which is to investigate the analog of \eqref{E:Malthus} for the random variable 
$\crochet{ \Zz_t,f}$ itself, rather than merely its expectation.
More precisely, assuming for simplicity that  the growth-fragmentation  process $\Zz$ starts from a single individual with mass $x>0$ and writing $\PP_x$ for the corresponding probability law, we prove the following result:
\begin{theorem}\label{t:main}
  Under the assumptions of \autoref{t:0},
  the process $\Zz$ exhibits \emph{strong Malthusian behavior}: for all $x>0$ and for $f$ any continuous function satisfying $\lVert f/h \rVert_{\infty}<\infty$, one has
    \begin{equation} \label{E:MalthusF}
    \lim_{t\to \infty} \e^{-\lambda t} \crochet{ \Zz_t,f}=   \crochet{\nu,f} W_{\infty} \qquad \text{ in }L^1(\PP_x),
  \end{equation}
  where 
  $$W_\infty=\lim_{t\to \infty} \e^{-\lambda t} \crochet{ \Zz_t,h} \quad\text{and} \quad \EE_x[W_{\infty}]=h(x).$$
\end{theorem}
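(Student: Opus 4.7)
The plan is organised around the \emph{intrinsic martingale} $W_t := \e^{-\lambda t}\crochet{\Zz_t,h}$. Using the branching property of $\Zz$ together with the identity $\EE_y[\crochet{\Zz_r,h}]=\e^{\lambda r}h(y)$ (a many-to-one consequence of the Feynman-Kac construction of $h$ in~\cite{BeFK}), one verifies that $(W_t)_{t\ge0}$ is a nonnegative $\PP_x$-martingale of constant mean $h(x)$, hence $W_t\to W_\infty$ $\PP_x$-almost surely by Doob's theorem. The theorem then splits into two sub-tasks: (a) upgrading this to convergence in $L^1(\PP_x)$, so that $\EE_x[W_\infty]=h(x)$; and (b) extending from $f=h$ to every continuous $f$ with $\lVert f/h\rVert_\infty<\infty$.

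Both tasks I would attack by second-moment, many-to-two estimates, whose main input is the exponential rate of Malthusian convergence established in~\cite{BeFK} under~\eqref{E:main}. For~(a) I would express $\EE_x[W_t^2]$ via a many-to-two formula as a double Feynman-Kac functional of two spines that coincide up to a common-ancestor fission and then evolve independently as copies of $X$. Hypothesis~\eqref{E:main} should, as in the first-moment theory of~\cite{BeFK}, translate into positive Harris recurrence of the spine $X$ under the tilted kernel and hence into a spectral gap, bounding $\EE_x[W_t^2]$ uniformly in $t$ by a constant multiple of $h(x)^2$. An equivalent route is the Lyons-Pemantle-Peres spine decomposition: introduce $\dd\tilde\PP_x/\dd\PP_x\bigl|_{\FF_t}=W_t/h(x)$, under which $\Zz$ decomposes as a tagged spine (essentially $X$ tilted by the Feynman-Kac weight $\Ee_t$) together with independent immigrations of copies of $\Zz$ at each fission event of the spine, and then verify Durrett's criterion $W_\infty<\infty$ $\tilde\PP_x$-a.s., which is equivalent to uniform integrability of $W_t$ under $\PP_x$.

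For~(b) I would reduce to an $L^2$-bound on the \emph{centred} quantity. Set $\tilde f:=f-\crochet{\nu,f}h$, which satisfies $\crochet{\nu,\tilde f}=0$ because $\crochet{\nu,h}=1$, so that
\[ \e^{-\lambda t}\crochet{\Zz_t,f}-\crochet{\nu,f}W_t = \e^{-\lambda t}\crochet{\Zz_t,\tilde f}. \]
Expanding $\EE_x[(\e^{-\lambda t}\crochet{\Zz_t,\tilde f})^2]$ via the many-to-two formula produces a diagonal term of order $\e^{-\lambda t}$ (negligible) and an off-diagonal term which, after the usual change of variable to the ancestor time $u$, becomes an integral of quantities of the form $\e^{-\lambda(t-u)}\EE_\cdot[\crochet{\Zz_{t-u},\tilde f}]$ evaluated along the spine. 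Because $\crochet{\nu,\tilde f}=0$, these inner expectations decay exponentially fast by the first-moment spectral gap from~\cite{BeFK}, so the whole integral vanishes as $t\to\infty$. This gives the stronger convergence $\e^{-\lambda t}\crochet{\Zz_t,f}\to\crochet{\nu,f}W_\infty$ in $L^2(\PP_x)$ (combining with the $L^2$-convergence $W_t\to W_\infty$ from~(a)), from which~\eqref{E:MalthusF} follows.

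The main obstacle is step~(a): converting the tail hypothesis~\eqref{E:main} into either the many-to-two $L^2$-bound or Durrett's criterion requires quantitative control of the Feynman-Kac-tilted semigroup of $X$ and of the joint law of two spines just before and after their common-ancestor split. The two sides of~\eqref{E:main} play asymmetric roles, reflecting that mass concentrating at $0$ or at $\infty$ can overproduce $h$-mass through the martingale in structurally different ways, so both conditions must enter in an essential manner.
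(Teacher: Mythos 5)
Your step (a) is essentially the paper's argument: the measure change $\dd\tilde\PP_x/\dd\PP_x|_{\FF_t}=W_t/h(x)$ and the spine decomposition are exactly what is used, except that the paper goes one step further than the Lyons--Pemantle--Peres/Durrett criterion and bounds $\sup_t\tilde\EE_x[W_t]$, which is equivalent to $L^2$-boundedness of $W$ under $\PP_x$. The technical content there is to control the sum $\sum_{s\in\tilde F}\e^{-\lambda s}\tilde X_{s-}$ over fission times of the spine, which is where both halves of \eqref{E:main} enter: the condition at $\infty$ gives $\e^{-\lambda' t}\tilde X_t\to0$ for some $\lambda'<\lambda$, and the condition at $0$ gives a linear-in-$t$ bound on $\tilde\EE_x[1/\ell(\tilde X_t)]$, which controls the fission rate $\tilde B\le C/\ell$. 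Your intuition that the two sides of \eqref{E:main} play asymmetric roles is exactly right.

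Step (b) is where you genuinely diverge from the paper, and where there is a gap. Your many-to-two computation requires the bound $\bigl|\e^{-\lambda r}\EE_y[\crochet{\Zz_r,\tilde f}]\bigr|\le C\,h(y)\,\e^{-\epsilon r}$ \emph{uniformly in the position $y$ of the common ancestor}, integrated over all split times and ancestor positions, including $y$ near $0$ and $\infty$. Neither the paper nor (as far as the hypotheses here go) \cite{BeFK} supplies such a uniform-in-space rate: the spine $\tilde X$ is only shown to be uniformly ergodic on compact subsets of $(0,\infty)$ (Corollary~\ref{C1}), with no rate and no uniformity near the boundary. The paper avoids needing any rate at all by a Nerman-type two-time decomposition: condition on $\FF_t$, discard the individuals with $Z^u_t\notin K$ using a tightness estimate (Lemma~\ref{L:B}, which is a direct consequence of positive recurrence of the spine), control the centred fluctuations of the remaining, conditionally independent, contributions by the $L^2$-bound of Lemma~\ref{L:C} uniformly over $K$, and then let $s\to\infty$ using only the \emph{qualitative} uniform-on-compacts convergence of the mean. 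To complete your route you would have to either (i) prove the uniform exponential estimate for $\e^{-\lambda r}\EE_y[\crochet{\Zz_r,\tilde f}]$ on all of $(0,\infty)$ (plausible but not available off the shelf, precisely because of the boundary behaviour), or (ii) reintroduce a truncation to a compact set of ancestor positions, at which point you are reconstructing the paper's tightness argument. A secondary issue is that a rigorous many-to-two formula must cope with the possible accumulation of fission events in finite time (the number of individuals may explode), which the paper's generation-by-generation proof of the many-to-one formula handles carefully; this is not fatal but is not free either.
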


%Interestingly, the condition \eqref{E:main} was shown in \cite{BeFK} to imply that the Malthusian behavior \eqref{E:Malthus} occurs exponentially fast.
The criterion \eqref{E:main} involves the Malthus exponent $\lambda$, which is itself usually not explicitly known. It might therefore appear unsatisfactory.
However, one can easily obtain lower-bounds for $\lambda$ solely in terms of the characteristics of the growth-fragmentation process, and these yield a fully explicit criterion.
We give an example of such a result as a conclusion to this work.

Of course, even though the Malthusian behavior \eqref{E:Malthus} suggests that its strong version \eqref{E:MalthusF} might hold, this is by no means automatic. For instance, it should be plain that \eqref{E:MalthusF} cannot hold when  $\lambda$ is negative.

The question of strong Malthusian behavior
has been considered  in the literature on branching processes for several different models, including
general Crump-Mode-Jagers branching processes \cite{Nerman, Jagers,JN84},
branching random walks \cite{Biggins92},
branching diffusions \cite{BBHHR,  EnHaKy, GiHaHa, HaHa, HaHeKy}, 
branching Markov processes \cite{AsHer, CRY, ChYu, Shio}, 
pure fragmentation processes \cite{Beresty, BeRou, Ber-frag}
and certain other growth-fragmentation processes \cite{BBCK, Dadoun, ShiQ-ou}.
A notable recent development is the study of the neutron transport equation
and associated stochastic processes
\cite{CHHK, HKV, HHK}, which uses a different probabilistic approach based on the notion of quasi-stationarity, as in \cite{ChVil}.
%\textcolor{red}{[** I moved all refs to neutron transport down here, even though only \cite{HHK} deals with the \emph{strong} Malthusian behavior.]}
Of course, these are just a sample of works on this topic,
and many more references can be found cited within them.
In particular, we can view $(\Zz_t, t\ge 0)$ as a general branching process
in the sense of Jagers \cite{Jagers}. This means that, rather than tracking
the mass of individuals at a given time, we instead track the birth time, birth
mass and death (i.e., fission) time
of every individual
in each successive generation. This process can be characterised in terms
of a \emph{reproduction kernel}; given the birth time and mass of an
individual, this describes the distribution of the
birth times and masses of its daughters.
Assuming that this general branching
process is Malthusian and supercritical (as defined in Section 5 of \cite{Jagers} in terms of the reproduction kernel), and that a certain $x\log x$  integrability condition and some further technical assumptions are fulfilled, Theorem 7.3 in \cite{Jagers} essentially states that
\eqref{E:MalthusF} holds with $W_{\infty}$ the terminal value of the  so-called {\em intrinsic martingale}. However, the assumptions and the quantities appearing in Theorem 7.3 in \cite{Jagers} are defined in terms of the reproduction kernel, sometimes in an implicit way. It appears to be rather difficult to understand the hypotheses and conclusions of \cite{Jagers} in terms of the parameters of the growth-fragmentation process; for instance, it does not seem to be straightforward to connect the general branching
process with the eigenproblem \eqref{E:eigenp}.

Our approach combines classical elements with some more recent ingredients.
%Our previous work \cite{BW,BeFK} can be used to show that the
%the Malthusian behavior \eqref{E:Malthus} holds for the intensity measure.
Given the Malthusian behaviour recalled in \autoref{t:0},
the main technical issue is 
to find explicit conditions, in terms of the characteristics of the growth-fragmentation, which ensure the uniform integrability of the intrinsic martingale.
%\footnote{\textcolor{red}{Removed this sentence: As alluded to above, growth-fragmentation processes belong to the
%family of general branching processes (recall that growth-fragmentations evolve in continuous time, whereas many models on branching processes rather concern discrete times).}}
However, the intrinsic martingale is defined in terms of the generations of the
associated general branching process rather than in natural time (see Section 5 of \cite{Jagers}), and it is difficult to connect this to the dynamics of the growth-fragmentation process.
%, which are essentially given by its infinitesimal generator $\mathcal{A}$.

We will circumvent this difficulty as follows.
As \autoref{t:0} may suggest, we first establish a so-called many-to-one
(or Feynman-Kac) formula, which provides an expression for the intensity measure $\mu_t$ of the point process $\Zz_t$ in terms of a functional of the (piecewise deterministic) Markov process $X$. Making use of results in \cite{BW}, this enables us to confirm that $\mu_t$ indeed solves the growth-fragmentation equation \eqref{E:GFE},
%(which should certainly not come as a surprise since the growth-fragmentation process and the growth-fragmentation equation are indeed meant to  model two sides of the same phenomenon)
and to construct a remarkable additive martingale associated with the growth-fragmentation process $\Zz$, namely
$$W_t=\e^{-\lambda t} \crochet{\Zz_t, h},\qquad t\geq 0,  $$
where the Malthus exponent $\lambda$ and the function $h$ are defined in terms of the Markov process $X$.   
In fact, $W$ is nothing but the version in natural times of the intrinsic martingale indexed by generations, as defined in Section 5 of \cite{Jagers}.  
We shall then prove that the boundedness  in $L^2(\PP_x)$, and hence the uniform integrability, of the martingale $W$ follows from \eqref{E:main} by adapting the well-known spinal decomposition technique
(described in \cite{BigKyp} for branching random walks) to our framework.
      
The spine process, which is naturally associated to the intrinsic martingale, plays an important role in the proof of  the strong Malthusian behavior \eqref{E:MalthusF}. Specifically, it yields a key tightness property for the random point measures $\Zz_t$, which then enables us to focus on individuals with masses bounded away from $0$ and from $\infty$. This is crucial to extend the original method of Nerman \cite{Nerman} to our setting. 
     
The rest of this paper is organized as follows. In Section 2, we describe the precise construction of the growth-fragmentation process $\Zz$, which is needed in Section 3 to establish a useful many-to-one formula for the intensity measure $\mu_t$ of $\Zz_t$. In particular, a comparison with results in \cite{BW} makes  the connection with the growth-fragmentation equation \eqref{E:GFE} rigorous. 
The $L^2$-boundedness of the intrinsic martingale is established in Section 4 under the assumption \eqref{E:main},
and we then prove the strong Malthusian behavior \eqref{E:MalthusF} in Section 5. Section 6 is devoted to providing explicit conditions on the characteristics of the growth-fragmentation that ensure \eqref{E:main}. 
     
\section{Construction of the growth-fragmentation process}
To start with, we introduce the three characteristics $c, {B}$ and $\kappa$ which govern the dynamics of the growth-fragmentation process. First, let   $c\from (0,\infty) \to (0,\infty)$ be a continuous function with 
\begin{equation}\label{e:condc}
\sup_{x>0} c(x)/x<\infty,
\end{equation}
which describes the growth rate of individuals as a function of their masses. 
For every $x_0>0$, the initial value problem
\begin{equation}\left\{
  \begin{aligned}
    \dot{x}(t) &= c(x(t)), \qquad t \geq  0, \\
    x(0) &= x_0,
  \end{aligned}
  \right.
  \label{e:ode}
\end{equation}
has a unique solution that we  interpret as the mass at time $t$ of an individual with initial mass $x_0$ when no fission  occurred before time $t$. 

Next, we consider a bounded,
measurable function ${B}\colon (0,\infty)\to [0,\infty)$,
% such that $\sup_{(0,K]}B<\infty$ for every $K>0$,
which specifies the rate at which a particle breaks (or branches) as a function of its mass. That is,  the probability that no fission event has occurred by time $t>0$ when the mass at the initial time is $x_0$, is given by
$$\PP_{x_0}[\text{no fission before time }t]=\exp\left(-\int_0^t {B}(x(s))\dd s\right)=\exp\left(-\int_{x_0}^{x(t)} \frac{{B}(y)}{c(y)}\dd y\right).$$

To complete the description and specify the statistics at fission events, we need to introduce some further notation. We  call a non-increasing sequence $\pp=(p_1, p_2, \ldots)$ in the unit sphere of $\ell^1$, i.e.,
$$  p_1\geq p_2\geq \dotsb \geq 0 \text{ and } \sum_{i\geq 1}p_i= 1,$$
a (proper) {\em mass partition}.  In our setting, we interpret a mass partition as the sequence (ranked in the non-increasing order) of the daughter-to-mother mass ratios at a fission event, agreeing that $p_i=0$ when the mother begets less than $i$ daughters. 
The space $\Pp$ of mass partitions is naturally endowed with the $\ell^1$-distance
and we write ${\mathcal B}(\Pp)$ for its Borel $\sigma$-algebra. We consider a probability kernel 
$$\kappa  \from (0,\infty)\times {\mathcal B}(\Pp) \to [0,1],$$
and think of $\kappa (x,\dd \pp)$ as the distribution of the random mass partition resulting from a fission event that occurs when the mother has mass $x>0$. We always implicitly assume that $\kappa(x, \dd \pp)$ has no atom at the trivial mass partition $(1,0,0, \ldots)$, as the latter corresponds to a fictive fission.

We next provide some details on the construction of growth-fragmentation processes and make the framework rigorous.  We denote by $\Uu = \bigcup_{n\ge 0} \NN^n$ the Ulam-Harris tree of finite sequences of positive integers, which will
serve as labels for the individuals.
As usual, we interpret the length $|u|=n$ of a sequence $u\in\NN^n$ as a generation, and for $i\in\NN$, 
write $ui$ for the sequence in $\NN^{n+1}$ obtained by aggregating $i$ to $u$ as its $(n+1)$-th element,  viewing then $ui$ as the $i$-th daughter of $u$.
The unique element of $\NN^0$, written $\varnothing$, will represent
an initial individual. 

We fix $x_0>0$ and aim at constructing the growth-fragmentation process $(\Zz_t, t\geq 0)$ started from 
a single atom at $x_0$, which we understand to represent a single
progenitor individual, \emph{Eve}. We denote by $\PP_{x_0}$ the corresponding
probability measure.
First consider a random variable
$\zeta $ in $(0, \infty]$ with cumulative distribution function
$$\PP_{x_0}[\zeta  \leq t]= 1-\exp\left(-\int_{x_0}^{x(t)} \frac{{B}(y)}{c(y)}\dd y\right), \qquad t\geq 0,$$
where $x(\cdot)$ denotes the solution to the flow velocity \eqref{e:ode} started from $x_0$. We view $\zeta $ as the fission time of Eve, and thus the trajectory of Eve is 
$$Z_t^{\varnothing}=x(t) \text{ for }t<\zeta .$$
We further set $b^{\varnothing}= 0$ and $d^{\varnothing}=\zeta $, so $[b^{\varnothing}, d^{\varnothing})$ is the time interval during which Eve is alive.  We also view 
$d^{\varnothing}$ as the birth-time of the daughters of Eve and thus set  $b^i=d^{\varnothing}$ for every $i\in\NN$.

Next, conditionally on $d^{\varnothing}=s<\infty$, that is, equivalently, on $Z_{d^{\varnothing}-}^{\varnothing}=x$
with $x=x(s)$, we pick a random mass partition $\pp=(p_1, \ldots)$ according to the law $\kappa(x,\dd \pp)$. We view 
$xp_1, xp_2, \ldots$ as the masses at birth of the daughters of Eve and continue the construction iteratively in an obvious way. That is, conditionally on  $xp_i=y>0$,  the lifetime $\zeta^i$ of the $i$-th daughter of Eve has the same distribution as $\zeta $ under $\PP_{y}$. 
Further set $d^i=b^i+\zeta^i$, and the trajectory of the $i$-th daughter of Eve is thus
$$Z_{t}^{i}=x(t-b^i) \text{ for }t\in[b^i,d^i),$$
with $x(\cdot)$ now denoting the solution to  \eqref{e:ode} started from $y$.
We stress that, thanks to \eqref{e:condc}, the boundary point $0$ is a trap for the flow velocity, in the sense that  
the solution to \eqref{e:ode} with initial value $x(0)=0$ is $x(t)=0$ for all $t$. Thus $0$ serves a cemetery state for particles, and individuals with zero mass can be simply discarded. 

 This enables us construct recursively a trajectory $(Z^u_t: t\in[b^u, d^u))$ for every $u\in \Uu$, and  the state of the growth-fragmentation at time $t$ is then given by the point measure on $(0,\infty)$ with atoms at the locations of the individuals alive at time $t$, viz.
$$\Zz_t = \sum_{u\in\Uu} \Ind_{\{t\in[b^u, d^u)\}} \delta_{Z^u_t}.$$
We stress that the number of individuals may explode at a finite time even in situations when every mother always begets finitely many children  (see, e.g. \cite{Savits}), and then infinitely many fission events may occur on any non-degenerate time interval. On the other hand, it is readily seen from
our key assumption \eqref{e:condc}
that the total mass process
increases at most exponentially fast, specifically 
$$\crochet{\Zz_t, {\mathrm {Id}}}\leq x\e^{\gamma t}, \qquad \PP_x{\text{-a.s.}} $$
where $\gamma=\sup_{x>0} c(x)/x$. Thus the point process $\Zz_t$ is always locally finite; however the growth-fragmentation is not always a continuous time Markov chain.

\section{A many-to-one formula}

The first cornerstone of our analysis is a useful  expression for the expectation of the integral of some function with respect to the random point measure $\Zz_t$ in terms of a certain Markov process $X$ on $(0,\infty)$.  In the literature, such identities are commonly referred to as
many-to-one formulas, they go back to \cite{KaP, Pey} and are known to play a crucial role in the analysis of branching processes.

Recall that  a size-biased pick from  a mass partition $\pp=(p_1, \ldots)$ refers to a random element 
$p_K$, where the distribution of the random index $K$ is $\PP(K=i)=p_i$ for $i\in\NN$. Size-biased picking enables us to map the probability kernel $\kappa$ on $(0,\infty)\times \Pp$
into a kernel $\bar k$ on $(0,\infty)\times (0,1)$ by setting for every $x>0$
$$\int_{(0,1)} g(r)\bar k(x,\dd r)= {B}(x) \int_{\Pp} \sum_{i=1}^{\infty} p_ig(p_i) \kappa(x, \dd \pp)$$
for a generic measurable function $g\from (0,1)\to \RR_+$. 
We stress that $\int_{(0,1)} \bar k(x,\dd r)= {B}(x)$ since $\kappa$ is a probability kernel on the space of proper mass partitions. 
We then introduce the operator 
$${\mathcal G} f (x) = c(x) f'(x) +\int_{(0,1)} (f(rx) - f(x)) \bar k(x, \dd r),$$
say defined for functions $f: (0,\infty)\to \RR$ which are bounded and possess a bounded and continuous derivative. It is easily seen that  ${\mathcal G}$
is the infinitesimal generator of a unique Markov process, say $X=(X_t, t\geq 0)$. 
Recall that we have assumed condition \eqref{e:condc}
and that $B$ is bounded.
% Since the function determined by \eqref{e:ode}
% cannot blow up in finite time, the fact that $B$ may be unbounded
% at $+\infty$ does not impede the construction of $X$.
By a slight abuse, we also use also the notation $\PP_{x_0}$ for the probability measure under which this piecewise deterministic Markov process starts from $X_0=x_0$.

The evolution of $X$ can be described in words as follows. The process is driven by the flow velocity \eqref{e:ode} until it makes a first downwards jumps; more precisely, the total rate of jump at state $x$ is $\int_{(0,1)}\bar k(x,\dd r)={B}(x)$. Further, conditionally on the event that a jump  occurs when the process is about to reach $x$, the position after the jump is distributed according to the image of the probability law ${B}(x)^{-1}\bar k(x, \dd r)$ by the dilation $r\mapsto rx$. An alternative formulation which makes the connection to the growth-fragmentation process more transparent, is that  $X$ follows the path of Eve up to its fission, then picks a daughter at random according to a size-biased sampling and follows her path, and so on, and so forth. 

We now state a useful representation of the intensity measure of $\Zz_t$ in terms of the Markov process $X$.

\begin{lemma}[Many-to-one formula -- Feynman-Kac representation] \label{L1}
  Define, for every $t\geq 0$,
  \[ 
    \Ee_t = \exp\biggl\{\int_0^t \frac{c(X_s)}{X_s} \, \dd s\biggr\}.
  \]
  For every measurable $f\from (0,\infty)\to \RR_+$ and every $x_0>0$, we have
  \[
    \EE_{x_0} \left[ \langle \Zz_t, f\rangle \right]
    = x_0\EE_{x_0}\left[ \frac{f(X_t)}{X_t}\Ee_t \right].
  \]
\end{lemma}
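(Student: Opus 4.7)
The plan is to establish the identity by decomposing both sides according to the number of fission events (on the branching side) and the number of jumps of $X$ (on the spine side). Write
\[
\Zz_t = \sum_{n\geq 0} \Zz^{(n)}_t, \qquad \Zz^{(n)}_t := \sum_{|u|=n} \Indic{t\in[b^u,d^u)}\, \delta_{Z^u_t},
\]
and let $0=\tau_0<\tau_1<\dotsb$ denote the successive jump times of $X$. Because all integrands are nonnegative and $\sum_{n\geq 0}\Indic{\tau_n \leq t<\tau_{n+1}}=1$, monotone convergence reduces the claim to showing, for every $n\geq 0$,
\[
\EE_{x_0}\bigl[\crochet{\Zz^{(n)}_t,f}\bigr] = x_0\, \EE_{x_0}\!\left[\frac{f(X_t)}{X_t}\,\Ee_t\,\Indic{\tau_n\leq t<\tau_{n+1}}\right],
\]
which I would prove by induction on $n$.

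The base case $n=0$ hinges on the crucial flow identity $x_0\,\Ee_t = X_t$ valid on the free-flight event $\{\tau_1>t\}$, obtained by integrating $\dd \log x(s) = (c(x(s))/x(s))\,\dd s$ along \eqref{e:ode}. Combined with the observation that both the probability of no fission and the probability of no jump by time $t$ equal $\exp(-\int_0^t B(x(s))\,\dd s)$, both sides collapse to $f(x(t))\exp(-\int_0^t B(x(s))\,\dd s)$.

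For the inductive step, I would condition on the first fission time $\zeta$ (branching side) and the first jump time $\tau_1$ (spine side); both occur with hazard $B(x(s))$ along the pre-jump flow. Applying the branching property, the strong Markov property, and the inductive hypothesis to the descendants, the two expressions reduce to a single $\dd s$-integral whose post-jump factors match by virtue of the defining identity
\[
\int_{(0,1)} g(r)\,\bar k(x,\dd r) = B(x)\int_{\Pp} \sum_{i=1}^{\infty} p_i\, g(p_i)\,\kappa(x,\dd\pp),
\]
applied to $g(r) = \EE_{rx(s)}[f(X_{t-s})\Ee_{t-s}\Indic{\tau_{n-1}\leq t-s<\tau_n}/X_{t-s}]$. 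On the branching side the weight $p_i$ appears because, by the inductive hypothesis applied to the daughter of birth mass $p_i x(s)$, one picks up $p_i x(s)$ as a prefactor; on the spine side the same $p_i$ arises from the size-biased sampling built into $\bar k$. The common prefactor $x(s)$ reconciles with $x_0$ on the spine side through the free-flight identity $x_0\,\Ee_s = x(s)$.

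The main obstacle is the careful bookkeeping required to align the generation-indexed decomposition of $\Zz$ with the jump-indexed decomposition of $X$; there is no deep analytic subtlety. Boundedness of $B$ ensures $X$ has at most Poisson-many jumps in $[0,t]$ and is therefore non-explosive, while the possible explosion of $\Zz$ noted in Section 2 is harmless since we work with nonnegative quantities and only invoke monotone convergence when summing over $n$.
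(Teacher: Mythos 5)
Your proposal is correct and follows essentially the same route as the paper's own proof: induction on generations, matching the $n$-th generation of $\Zz$ with the $n$-th inter-jump interval of $X$, the flow identity $x_0\Ee_t=x(t)$, the branching/strong Markov property at the first fission/jump, and the size-biasing built into $\bar k$ to produce the weights $p_i$. No substantive differences to report.
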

Lemma \ref{L1} is closely related to Lemma 2.2 in \cite{BW}, which provides a representation of the solution to the growth-fragmentation equation \eqref{E:GFE} by Feynman-Kac formula. Specifically, introduce the growth-fragmentation  operator ${\mathcal A}$
given for every $f\in{\mathcal C}^1_c$ by
\begin{eqnarr}\label{E:GFG}
{\mathcal A}f(x) &=&c(x) f'(x) +  \int_{(0,1)} r^{-1} f(rx) \bar k(x, \dd r) -{B}(x) f(x) \nonumber \\
&=&
c(x) f'(x) + {B}(x) \int_{\Pp} \left( \sum_{i=1}^{\infty}f(xp_i)-f(x)\right)\kappa(x, \dd \pp),
\end{eqnarr}
then comparing Lemma \ref{L1} above and Lemma 2.2 in \cite{BW} shows that the intensity measure $\mu_t$ of $\Zz_t$ solves \eqref{E:GFE} with $\mu_0=\delta_{x_0}$.
A fairly natural approach for establishing Lemma \ref{L1} would be to argue first that the intensity measure of $\Zz_t$ solves the growth-fragmentation equation for ${\mathcal A}$ given by \eqref{E:GFG} and then invoke Lemma 2.2 in \cite{BW}. This idea is easy to implement when the number of daughters after a fission event is bounded (for instance, when fissions are always binary); 
however, making  this analytic approach fully rigorous in the general case would be rather tedious, as the total number of individuals may explode in finite time and thus fission events accumulate. We rather follow a classical probabilistic approach and refer to the treatise by Del Moral \cite{Delmoral} and the lecture notes of Shi \cite{Shi} for background.

\begin{proof} %[Proof of Lemma \ref{L1}] 
We set $T_0=0$ and then write $T_1<T_2<\dotsb$ for the sequence of the jump times of the piecewise deterministic Markov process $X$.
We claim that for every generation $n\geq 0$,  there is the identity
 \begin{equation}\label{e:m21gen}
    \EE_{x_0} \left[ \sum_{|u|=n} f(Z^u_t) \Ind_{\{b^u \leq t  < d^u\}}  \right]
    = x_0\EE_{x_0}\left[ \Ind_{\{T_n \leq t  < T_{n+1}\}}\frac{f(X_t)}{X_t}\Ee_t \right].
  \end{equation}
The many-to-one formula of Lemma \ref{L1} then follows by summing over all generations.
  
  We shall now establish \eqref{e:m21gen} by iteration. 
The identity
\begin{equation}\label{e:Eexp}
\exp\left( \int_0^t \frac{c(x(s))}{x(s)}\dd s\right) = \frac{x(t)}{x(0)}
\end{equation}
for the solution to the flow velocity \eqref{e:ode}
 makes  \eqref{e:m21gen}  obvious for the generation $n=0$.

 Next, by considering the fission rates of Eve, we get that for every measurable function $g\from [0,\infty)\times [0,\infty)\to \RR_+$ with $g(t,0)=0$, we have
\begin{equation}
 \EE_{x_0}\left[ \sum_{i=1}^{\infty} g(b^i, Z^i_{b^i})\right]
 =\int_0^{\infty} \dd t {B}(x(t)) \exp\left(-\int_0^t {B}(x(s)) \dd s\right) \int_{\Pp} \kappa(x(t), \dd \pp)
 \sum_{i=1}^{\infty}g(t,x(t)p_i).
 \label{e:interm1}
\end{equation}
  We then write 
  $$ \sum_{i=1}^{\infty}g(t,x(t)p_i)=x(t) \sum_{i=1}^{\infty}p_i\frac{g(t,x(t)p_i)}{x(t)p_i},$$
 so that by comparing with the jump rates of $X$, we see that
 the right-hand side of \eqref{e:interm1} 
 equals
$$
 \EE_{x_0}\left[ \frac{g(T_1, X_{T_1})}{X_{T_1}}X_{T_1-}\right] = 
  x_0 \EE_{x_0}\left[ \frac{g(T_1, X_{T_1})}{X_{T_1}}\Ee_{T_1}\right],
$$
where the  identity stems from \eqref{e:Eexp}. Putting the pieces together, we have shown that
\begin{equation} \label{e:interm}
\EE_{x_0}\left[ \sum_{i=1}^{\infty} g(b^i, Z^i_{b^i})\right]= x_0 \EE_{x_0}\left[ \frac{g(T_1, X_{T_1})}{X_{T_1}}\Ee_{T_1}\right].
\end{equation}

We then assume that  \eqref{e:m21gen} holds for a given $n\geq 0$. Applying the branching property at the fission event of Eve, we get 
 $$   \EE_{x_0} \left[ \sum_{|u|=n+1} f(Z^u_t) \Ind_{\{b^u \leq t  < d^u\}}  \right]= 
 \EE_{x_0}\left[ \sum_{i=1}^{\infty} g(b^i, Z^i_{b^i})\right],
 $$
 with 
 $$g(s,y)= \EE_{y} \left[ \sum_{|u|=n} f(Z^u_t) \Ind_{\{b^u \leq t-s  < d^u\}}  \right] = y\EE_y\left[ \Ind_{\{T_n \leq t-s  < T_{n+1}\}}\frac{f(X_{t-s})}{X_{t-s}}\Ee_{t-s}
 \right]
  $$
 for $s\leq t$  and $g(s,y)=0$ otherwise. We conclude from the strong Markov property at the first jump time $T_1$ of $X$, 
the fact that  the functional $\Ee$ is multiplicative, and \eqref{e:interm},
 that the many-to-one formula \eqref{e:m21gen} holds for the generation $n+1$.
  By induction, \eqref{e:m21gen} holds for any $n$.
\end{proof}

In the final section of this work, we shall also need a version of Lemma \ref{L1} extended to the situation where, roughly speaking, individuals are frozen at
times which are observable from their individual trajectories.
Specifically, we define a \emph{simple stopping line} to be a functional $T$ on the space of piecewise continuous trajectories $z=(z_t)_{t\geq 0}$ and with values in  $[0,\infty]$, such that for every $t\geq 0$ and every trajectory $z$, if $T(z)\leq t$, then $T(z)=T(z')$ for any trajectory $z'$ that coincides with $z$ on the time-interval $[0,t]$. Typically, $T(z)$ may be the instant of the $j$-th jump of $z$, or the first entrance time $T(z)=\inf\{t>0: z_t\in A\}$ in some measurable set $A\subset (0,\infty)$. 
The notion of a simple stopping line is a particular case of the more general
stopping line introduced by Chauvin \cite{Cha-stop}. The restriction simplifies
the proofs somewhat, and will be sufficient for our applications later.

We next introduce  the notion of ancestral trajectories. Recall  from the preceding section the construction of the trajectory $(Z^u_t: t\in[b^u, d^u))$ for an individual $u=u_1\ldots u_n\in\Uu$.
The  sequence of prefixes $u^j=u_1 \ldots u_j$ for $j=1, \ldots, n$ forms the ancestral lineage of that individual. Note that, as customary for many branching models, the death-time of a mother always coincides with the birth-time of her children, so every individual $u$ alive at time $t>0$ (i.e. with $b^u\leq t < d^u$) has a unique ancestor alive at time $s\in[0,t)$, which is the unique prefix $u^j$ with $b^{u^j}\leq s < d^{u^j}$. We can thus define unambiguously  the mass at time $s$ of the unique ancestor of $u$ which is alive at that time, viz.\  $Z^u_s=Z^{u^j}_s$. This way, we extend $Z^u$ to $[0,d^u)$, and get the ancestral trajectory of the individual $u$.

For the sake of simplicity, for any simple stopping line $T$ and any trajectory $z$, we write $z_T=z_{T(z)}$, and define
the point process of individuals frozen at $T$ as
$$\Zz_T = \sum_{u\in\Uu} \Ind_{\{T(Z^u)\in[b^u, d^u)\}} \delta_{Z^u_T}.$$

\begin{lemma}\label{L1'} Let $T$ be a simple stopping line. 
    For every measurable $f\from (0,\infty)\to \RR_+$ and every $x_0>0$, we have
  \[
    \EE_{x_0} \left[ \langle \Zz_T, f\rangle \right]
    = x_0\EE_{x_0}\left[ \frac{f(X_T)}{X_T}\Ee_{T(X)}, T(X)<\infty \right].
  \]
\end{lemma}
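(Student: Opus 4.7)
The plan is to mimic the proof of \autoref{L1} by carrying out the same induction on generations, but with the deterministic time $t$ replaced throughout by the stopping-line value $T$. Concretely, I aim to establish, for every $n \ge 0$, the generation-$n$ identity
\begin{equation*}
\EE_{x_0}\biggl[\sum_{|u|=n} f(Z^u_T) \Ind_{\{T(Z^u) \in [b^u, d^u)\}}\biggr]
= x_0\, \EE_{x_0}\biggl[\Ind_{\{T_n \le T(X) < T_{n+1}\}} \frac{f(X_T)}{X_T}\, \Ee_{T(X)}\biggr],
\end{equation*}
where $0 = T_0 < T_1 < \cdots$ are the jump times of $X$. Since $B$ is bounded, $X$ is non-explosive, so $T_n \uparrow \infty$ almost surely; summing over $n$ then collapses $\sum_n \Ind_{\{T_n \le T(X) < T_{n+1}\}}$ to $\Ind_{\{T(X) < \infty\}}$ and yields the claimed formula.

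For the base case $n = 0$, let $\varphi(t) := x(t)$ denote the deterministic flow from $x_0$ and set $T^{\ast} := T(\varphi) \in [0,\infty]$. On the event $\{T^{\ast} < \zeta\}$, Eve's trajectory $Z^\varnothing$ coincides with $\varphi$ on $[0,T^{\ast}]$, so the defining property of a simple stopping line forces $T(Z^\varnothing) = T^{\ast}$; analogously, on $\{T^{\ast} < T_1\}$ the spine agrees with $\varphi$ on $[0,T^{\ast}]$, giving $T(X) = T^{\ast}$. Both sides of the identity then reduce, via \eqref{e:Eexp} and the fact that $\PP_{x_0}[\zeta > T^{\ast}] = \PP_{x_0}[T_1 > T^{\ast}] = \exp(-\int_0^{T^{\ast}} B(\varphi(s))\,\dd s)$, to the common expression $f(\varphi(T^{\ast})) \exp(-\int_0^{T^{\ast}} B(\varphi(s))\,\dd s) \Ind_{\{T^{\ast} < \infty\}}$.

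For the inductive step, I apply the branching property at Eve's fission. Conditionally on $\zeta = s$ and the mass partition $\pp$, the subtrees rooted at the daughters are independent growth-fragmentation processes started from masses $\varphi(s) p_i$ and time-shifted by $s$. For any descendant $u = iv$ at generation $n+1$, we necessarily have $T(Z^u) \ge b^u \ge s$, so the simple stopping property allows me to write $T(Z^u) = s + \tilde T_s(\tilde Z^{iv})$, where $\tilde Z^{iv}_t := Z^{iv}_{s+t}$ is the shifted ancestral trajectory in the $i$-th subtree and $\tilde T_s(z) := T(\varphi \cdot_s z) - s$ is the stopping line shifted by $s$ along Eve's prefix $\varphi$. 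A direct check shows that, for any admissible prefix, $\tilde T_s$ is again a simple stopping line, so the inductive hypothesis applies to each subtree. Summing over daughters and invoking the size-biased computation \eqref{e:interm} (which links Eve's fission to the first jump of $X$), together with the strong Markov property of $X$ at $T_1$ and the multiplicativity of $\Ee$, produces the identity at generation $n+1$.

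The main obstacle is simply verifying that the shifted functional $\tilde T_s$ remains a simple stopping line on the subtree trajectories, and that the conditional independence in the branching step survives this shift. Both points follow directly from unpacking the definition of a simple stopping line, but they are the precise reason the \emph{simple} restriction is imposed rather than allowing general (Chauvin-type) stopping lines: for the latter, the dependence of the stopping criterion on the genealogy could couple different subtrees and invalidate the decomposition above.
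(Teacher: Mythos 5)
Your proof is correct and follows essentially the same route as the paper's: the base case exploits the determinism of the trajectory before the first fission (resp.\ the first jump of $X$) to reduce both sides to the same explicit expression, and the inductive step reuses the branching decomposition and the identity \eqref{e:interm} from the proof of Lemma~\ref{L1}, before summing over generations. Your explicit verification that the time-shifted functional $\tilde{T}_s$ is again a simple stopping line fills in a detail that the paper leaves implicit with ``it follows readily by induction.''
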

\begin{proof} The proof is similar to that of Lemma \ref{L1}, and we use the same notation as there.
In particular, we write $x(\cdot)$ for the solution to the flow velocity \eqref{e:ode} started from $x(0)=x_0$, 
and set $T(x(\cdot))=t_0\in[0,\infty]$. By the definition of a simple stopping line, we have obviously that under $\PP_{x_0}$,
$T(Z^{\varnothing}) = t_0$ a.s.\ on the event $0\leq T(Z^{\varnothing})\leq d^{\varnothing}$, and 
also $T(X)=t_0$ a.s.\ on the event $0\leq T(X)\leq T_1$. 
Using \eqref{e:Eexp}, we then get
$$\EE \left[f(Z^{\varnothing}_T) \Ind_{\{b^{\varnothing} \leq T(Z^{\varnothing})  < d^{\varnothing}\}} \right] =x_0\EE_{x_0}\left[ \Ind_{\{0 \leq T  < T_{1}\}}\frac{f(X_T)}{X_T}\Ee_T \right].$$
Just as in the proof of Lemma \ref{L1}, it follows readily by induction 
that for every generation $n\geq 0$, there is the identity
$$
    \EE_{x_0} \left[ \sum_{|u|=n} f(Z^u_T) \Ind_{\{b^u \leq T(Z^u)  < d^u\}}  \right]
    = x_0\EE_{x_0}\left[ \Ind_{\{T_n \leq T  < T_{n+1}\}}\frac{f(X_T)}{X_T}\Ee_T \right],
$$
and we conclude the proof by summing over generations. 
\end{proof}

\section{Boundedness of the intrinsic martingale in \texorpdfstring{$L^2(\PP)$}{L\textasciicircum 2(P)}  }
 In order to apply results from \cite{BW,BeFK}, we shall now make some further fairly mild assumptions that will be enforced throughout the rest of this work. 
Specifically, we suppose henceforth that
\begin{equation}   \label{e:assump}
  \text{the Markov process $X$, with generator $\Gg$, is irreducible and aperiodic.}
\end{equation}
Although \eqref{e:assump} is expressed in terms of the Markov process $X$ rather than the characteristics of the growth-fragmentation process, it is easy to give some fairly general and simple conditions 
in terms of $c, B$ and $\kappa$ that guarantee \eqref{e:assump}; see notably Lemma 3.1 of \cite{BeFK} for a discussion of irreducibility. We further stress that aperiodicity should not be taken to granted if we do not assume the jump kernel $\bar k$ to be absolutely continuous.

\begin{remark} We mention that  a further assumption is made in \cite{BW,BeFK}, namely that the kernel $\bar k(x,\dd y)$ is absolutely continuous with respect to the Lebesgue measure, and that the function $(0,\infty) \ni x \mapsto \bar{k}(x,\cdot) \in L^1(0,\infty)$ is continuous. However, this is only needed in \cite{BW} to ensure some analytic properties (typically, the Feller property of the semigroup, or the connection with the eigenproblem \eqref{E:eigenp}), but had no role in the probabilistic arguments developed there. 
We can safely drop this assumption here, and apply results of \cite{BW,BeFK} for which it was irrelevant. 
\end{remark}

Following \cite{BW}, we introduce the Laplace transform
\[
  L_{x,y}(q) = \EE_x\bigl[ \e^{-qH(y)} \Ee_{H(y)} \Indic{H(y) < \infty} \bigr],\qquad q\in\RR,
\]
where $H(y) = \inf\{ t > 0: X_t = y \}$.
For any  $x_0>0$, the map $L_{x_0,x_0}\from \RR \to (0,\infty]$ is a convex non-increasing function
with  $\lim_{q\to \infty}L_{x_0,x_0}(q)=0$. We then {\em define} the {\em Malthus exponent} as
$$\lambda \coloneqq \inf\{q\in\RR: L_{x_0,x_0}(q)<1\}.$$
Recall that the value of $\lambda$ does not depend on the choice for $x_0$, and that although our definition of the Malthus exponent apparently differs from that in Section 5 of \cite{Jagers}, Proposition 3.3 of \cite{BW} strongly suggests that the two actually should yield the same quantity.

With this in place, we define the functions $\ell, h \from (0,\infty) \to (0,\infty)$ by
\[ \ell(x) = L_{x, x_0}(\lambda) \text{ and } h(x) = x \ell(x),  \] 
and may now state the main result of this section.

\begin{theorem} \label{T1}
Assume  
\begin{equation}\label{e:cc}
 \limsup _{x\to 0+} \frac{c(x)}{x}< \lambda \quad\text{and}\quad  \limsup _{x\to \infty} \frac{c(x)}{x}<\lambda.
 \end{equation}
 Then  for every $x>0$, the process
  \[ W_t = \e^{-\lambda t} \crochet{\Zz_t, h}, \qquad t\geq 0\]
  is a martingale bounded in  $L^2(\PP_x)$.
%  Moreover, its terminal value $W_{\infty}$ is positive $\PP_x$-a.s.
\end{theorem}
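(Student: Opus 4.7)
My plan is to proceed in three stages: first verify that $W$ is a martingale, then perform a spinal change of measure, and finally use hypothesis \eqref{e:cc} to bound $\EE_x[W_t^2]$ uniformly in $t$. The key auxiliary fact for the martingale property is that $M^X_t \coloneqq \e^{-\lambda t}\Ee_t\ell(X_t)$ is a $\PP_x$-martingale in the natural filtration of $X$. This follows from the strong Markov property of $X$ applied at the successive hitting times of $x_0$, together with $L_{x_0,x_0}(\lambda)=1$ (continuity of the convex map $L_{x_0,x_0}$ at $\lambda$). In particular $\EE_y[\ell(X_t)\Ee_t]=\ell(y)\e^{\lambda t}$, and combined with the many-to-one formula \autoref{L1} this gives $\EE_y[\crochet{\Zz_t,h}]=h(y)\e^{\lambda t}$. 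Applying the branching property of $\Zz$ at time $s\le t$ to each individual alive at $s$ then yields $\EE_x[W_t\mid\mathcal{F}_s]=W_s$.

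For the $L^2$-bound, I would introduce the tilted probability $\tilde\PP_x$ on $\mathcal{F}_t$ with density $W_t/h(x)$, and perform a spinal decomposition in the style of \cite{BigKyp, Shi} adapted to this piecewise deterministic branching setting. Under $\tilde\PP_x$ there is a distinguished ancestral line---the \emph{spine}---whose mass process $Y$ starts from $x$, follows the flow \eqref{e:ode} between jumps, and at fission events experiences an $\ell$-biased splitting law: when the spine mass is $y$ about to split, one samples the partition $\pp$ with density proportional to $\sum_i p_i\ell(yp_i)$ against $\kappa(y,\dd\pp)$, selects the continuing spine daughter with probability proportional to $p_J\ell(yp_J)$, and has the remaining non-spine daughters initiate conditionally independent copies of $\Zz$ under the original law $\PP$.

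The identity $\EE_x[W_t^2]=h(x)\tilde\EE_x[W_t]$ then reduces the problem to showing $\sup_t\tilde\EE_x[W_t]<\infty$. Decomposing $W_t$ as the spine contribution $\e^{-\lambda t}h(Y_t)$ plus the sum of $W$-values of the independent off-spine subtrees launched at each spine fission, and using the martingale property from Step 1 on each such subtree, one obtains a formula of the form
\[ \tilde\EE_x[W_t] = \tilde\EE_x\bigl[\e^{-\lambda t}h(Y_t)\bigr] + \tilde\EE_x\!\left[\int_0^t \e^{-\lambda s}\Phi(Y_s)\,\dd s\right], \]
where $\Phi(y)$ is an explicit function of $B(y)$, $\kappa(y,\cdot)$ and $\ell$ encoding the conditional expected $h$-mass of the off-spine daughters at a fission of the spine positioned at $y$.

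The principal obstacle is to bound both terms on the right uniformly in $t$, and this is the only place where the strict inequalities \eqref{e:cc} are used. They ensure that $c(y)/y-\lambda$ is strictly negative outside some compact subset of $(0,\infty)$, so that the Feynman-Kac weight $\e^{-\lambda s}\Ee_s$ decays exponentially whenever the spine strays near $0$ or $\infty$; combined with the control of $\ell$ developed in \cite{BW, BeFK}, this should yield an ergodic-type estimate on $Y$ under $\tilde\PP_x$ sufficient to integrate $\Phi$ against its occupation measure. I expect the technically delicate part to be the rigorous construction of $\tilde\PP_x$ together with the spine $Y$, and the bookkeeping needed to turn \eqref{e:cc} into the required integrability of $\Phi$ along the spine; the martingale identity itself will then be routine by comparison.
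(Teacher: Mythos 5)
Your overall architecture coincides with the paper's: the martingale property is obtained exactly as you describe (the $\PP_x$-martingale $\e^{-\lambda t}\ell(X_t)\Ee_t$, which requires $L_{x_0,x_0}(\lambda)=1$ and hence the finiteness of $L_{x_0,x_0}$ to the left of $\lambda$ supplied by \eqref{e:cc}, combined with the many-to-one formula and the branching property), and the $L^2$-bound is reduced, via the change of measure with density $W_t/W_0$ and the spine decomposition, to $\sup_{t}\tilde\EE_x[W_t]<\infty$, with $W_t$ split into the spine term and the off-spine subtrees whose conditional expectation given the spine is an additive functional $\int_0^t\e^{-\lambda s}\Phi(\tilde X_s)\,\dd s$ with $\Phi(y)\le \|\ell\|_\infty^2\|B\|_\infty\, y/\ell(y)$.

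The genuine gap is the last step, which you explicitly leave as a hope: bounding $\tilde\EE_x\bigl[\int_0^\infty\e^{-\lambda s}\tilde X_s/\ell(\tilde X_s)\,\dd s\bigr]$. Neither of the mechanisms you invoke closes it. There is no Feynman--Kac weight left to decay under $\tilde\PP_x$ (it has been absorbed into the change of measure), and an ergodic-type estimate on the spine is insufficient because the integrand $y/\ell(y)$ is \emph{unbounded} (it grows at least linearly as $y\to\infty$, and $1/\ell$ may blow up at the boundary), so convergence of one-dimensional marginals to a stationary law says nothing about these expectations. The paper instead proves two quantitative moment bounds (its Lemma~\ref{L2}): (i) applying the extended generator identity $\tilde{\mathcal G}(1/\ell)(x)=(c(x)/x-\lambda)/\ell(x)$, whose right-hand side is bounded above by some $a<\infty$ precisely because of both halves of \eqref{e:cc}, so that $\tilde\EE_x[1/\ell(\tilde X_t)]\le at+1/\ell(x)$ grows at most linearly; and (ii) the deterministic domination of the spine by the flow \eqref{e:ode}, which together with $\limsup_{x\to\infty}c(x)/x<\lambda$ gives $\e^{-\lambda' t}\tilde X_t\to 0$ in $L^\infty(\tilde\PP_x)$ for some $\lambda'<\lambda$. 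Writing $\e^{-\lambda s}\tilde X_s/\ell(\tilde X_s)=(1/\ell(\tilde X_s))\cdot\e^{-\lambda' s}\tilde X_s\cdot\e^{-(\lambda-\lambda')s}$ and combining the polynomial growth of the first factor's expectation with the uniform bound on the second and the leftover exponential makes the integral finite. Without some substitute for these two estimates your argument does not terminate, so you should supply them (or an equivalent Lyapunov-type bound) to complete the proof.
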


Before tackling the core of the proof of Theorem \ref{T1}, let us first recall some features proved in \cite{BW,BeFK} and their immediate consequences. From Section 3.5 in \cite{BeFK}, it is known that  \eqref{e:cc} ensures the existence of some $q<\lambda$ with $L_{x_0,x_0}(q)<\infty$. 
By continuity and non-increase of  the function $L_{x_0,x_0}$ on its domain, this guarantees that
\begin{equation}  \label{e:assump:malthus}
  L_{x_0 , x_0}(\lambda)=1.
\end{equation}
Theorem 4.4 in \cite{BW} then shows that $\e^{-\lambda t} \ell(X_t) \Ee_t$ is a $\PP_{x}$-martingale, 
and we can combine the many-to-one formula of Lemma \ref{L1} and the branching property of $\Zz$ to conclude that $W_t$ is indeed a $\PP_x$-martingale. We
therefore call $h$ a \emph{$\lambda$-harmonic function}; in this vein,
recall also from Corollary 4.5 and Lemma 4.6 in \cite{BW}  that
$h$ is an eigenfunction for the eigenvalue $\lambda$ of (an extension of) the growth-fragmentation operator ${\mathcal A}$ which has been defined in \eqref{E:GFG}.

We call $W=(W_t: t\geq 0)$ the \emph{intrinsic martingale}, as it bears a close connection to the process with the same name that has been defined in Section 5 of \cite{Jagers}. To explain this connection, it is convenient to  view the atomic measure $\e^{-\lambda t} \Zz_t$ as a weighted version of point measure $\Zz_t$, where  the weight of any individual at time $t$ is $\e^{-\lambda t}$. In this setting, $W_t$ is given by the integral of the $\lambda$-harmonic function $h$ with respect to the weighted atomic measure $\e^{-\lambda t} \Zz_t$. 
Next, consider for each $k\in \NN$, the simple stopping line $T_k$ at which a trajectory makes its $k$-th jump, and recall from the preceding section, that $\Zz_{T_k}$ then denotes the point measure obtained from $\Zz$ by freezing individuals at the instant when their ancestral trajectories jump for the $k$-th time. In other words, $\Zz_{T_k}$ is the point process that describes the position at birth of the individuals of the $k$-th generation. Just, as above, we further discount the weight assigned to each individual  at rate $\lambda$, so that the weight of an individual of the $k$-th generation which is born at time $b$ is $\e^{-\lambda b}$ (of course, individuals at the same generation are born at different times, and thus have different weights). The integral, say ${\mathcal W}_k$, of  the $\lambda$-harmonic function $h$ with respect to this  atomic measure, is precisely the intrinsic martingale as defined in \cite{Jagers}. Using more general stopping line techniques, one can check that the boundedness in $L^2$  of $(W_t: t\geq 0)$ can be transferred to $({\mathcal W}_k: k\in\NN)$. Details are left to the interested reader. 

 \begin{remark} Actually \eqref{e:assump:malthus}, which is a weaker assumption than \eqref{e:cc}, not only ensures that the process in continuous time $W_t=\e^{-\lambda t} \crochet{\Zz_t, h}$ is  a martingale, but also
 that the same holds for the process indexed by generations $({\mathcal W}_k: k\in\NN)$. Indeed, 
 from the very definition of the function $L_{x_0,x_0}$, \eqref{e:assump:malthus} states that the expected value under $\PP_{x_0}$
 of the nonnegative martingale $\e^{-\lambda t} \ell(X_t) \Ee_t$, evaluated at the first return time $H(x_0)$, equals $1$, and therefore the stopped martingale 
 $$\e^{-\lambda t\wedge H(x_0)} \ell(X_{t\wedge H(x_0)}) \Ee_{t\wedge H(x_0)}, \qquad t\geq 0$$ is uniformly integrable. Plainly, the first jump time of $X$, $T_1$ occurs before $H(x_0)$, and  the optional sampling theorem yields
 $$\EE_{x_0}[\e^{-\lambda T_1} \Ee_{T_1} \ell(X_{T_1})]=1.$$ 
 One concludes from the many-to-one formula of Lemma \ref{L1'} (or rather, an easy pathwise extension of it) 
 that $\EE_{x}[{\mathcal W}_1]=h(x)$ for all $x>0$, and the martingale property of ${\mathcal W}$ can  now be seen from the branching property.  \end{remark}

The rest of this section is devoted to the proof of Theorem \ref{T1}; in particular we assume henceforth that \eqref{e:cc} is fulfilled.

To start with, we recall from Lemma 4.6 of \cite{BW} that  the function $\ell$ is bounded and continuous, and 
as a consequence
\begin{equation}\label{e:bornh}
\sup_{y>0} h(y)/y = \sup_{y>0} \ell(y)=\| \ell \|_{\infty}<\infty.
\end{equation}   
Moreover $\ell$ and  $h$ are strictly positive, and thus bounded away from $0$ on compact subsets of $(0,\infty)$.  We shall use often these facts in the sequel.

The heart of the matter is thus to establish boundedness of $(W_t)_{t\ge 0}$
in $L^2(\PP_x)$, for which we follow the classical path based on  the probability tilting and spine decomposition; see e.g. \cite{BigKyp} and references therein. 
  
  For an arbitrary time $t>0$, one defines a probability measure $\tilde \PP_x$ on an augmented probability space by further distinguishing an individual $U_t$, called the spine, in such a way that 
  \[
    \tilde{\PP}_x[ \Lambda \cap \{ U_t = u \}]
    = h(x)^{-1}\e^{-\lambda t} \EE_x[  {h}(Z^u_t) \Ind_{\Lambda}\Ind_{\{b^u \leq t < d^u\}}]
  \]
  for $\Lambda \in \FF_t = \sigma( \Zz_t, s \le t)$ and $u\in \Uu$.
  The projection of $ \tilde{\PP}_x$ on $\FF_t$ is then absolutely continuous with respect to $\PP_x$ with density 
  $W_t/W_0$.
Recall that the martingale property of $W$ ensures the consistency of the definition of $\tilde \PP_x$. Precisely, 
 under  the conditional law $ \tilde{\PP}_x[\cdot \mid \FF_t]$, the spine is picked at random amongst the individuals alive at time $t$ according to an $h$-biased sampling, and the ancestor $U_s$ of $U_t$ at time $s\leq t$ serves as spine at time $s$.  
 
  In order to describe the dynamics of the mass of the spine $\tilde X_t=Z^{U_t}_t$ as time passes, we introduce first for every  $x>0$ 
$$w(x)=\int_{\Pp}\sum_{i=1}^{\infty} h(xp_i) \kappa(x, \dd \pp)$$
and  set
\begin{equation}\label{e:ratespine}
\tilde {B}(x)= \frac{w(x)}{h(x)}{B}(x)\ \text{and}\  \tilde \kappa(x, \dd \pp)= w(x)^{-1} \sum_{i=1}^{\infty} h(xp_i) \kappa(x, \dd \pp).
\end{equation}
In short, one readily checks that just as  $X$,  $\tilde X$  increases steadily and has only negative jumps. Its growth is driven by the flow velocity \eqref{e:ode}, and the total rate of negative jumps at location $x$ is  
$\tilde {B}(x)$, which is the total fission rate of the spine when its mass is $x$. Further, $\tilde \kappa(x, \dd \pp)$
gives  the distribution of the random mass partition resulting from the fission of the spine, given that the mass of the latter immediately before that fission event is $x$.  
At the fission event of the spine, a daughter is selected at random by $h$-biased sampling and becomes the new spine. We now gather some facts about the spine which will be useful later on.

\begin{lemma}\label{L20} 
Set $\tilde X_t=Z^{U_t}_t$ for the mass of the spine at time $t\geq 0$. 
\begin{enumerate}
\item The process 
 $\tilde X$ is Markovian and  exponentially point recurrent, in the sense that 
if we write $\tilde H(y) =\inf\{t>0: \tilde X_t=y\}$ for the first hitting time of $y>0$ by $\tilde X$, then there exists 
$\varepsilon>0$ such that
$\EE_x[\exp(\varepsilon \tilde H(y)] <\infty$.

\item The following many-to one formula holds: for every nonnegative measurable function $f$ on $(0,\infty)$, we have
\begin{equation}\label{e:mtoY}
  \EE_x[\crochet{\Zz_t, f}]=\e^{\lambda t} h(x)\tilde \EE_x[f(\tilde X_t)/h(\tilde X_t)].
\end{equation}

\item  Any function $g\from (0,\infty)\to \RR$ such that $g\ell$ is continuously differentiable belongs to the domain
of its extended infinitesimal generator $\tilde {\mathcal G}$ and 
\begin{equation}\label{E:genY}
    \tilde {\mathcal G}g(x) 
      = \frac{1}{\ell(x)} {\mathcal G}(g\ell)(x) +(c(x)/x- \lambda) g(x),
\end{equation}
in the sense that the process 
$$
      g(\tilde X_t)-\int_0^t \tilde {\mathcal G}g(\tilde X_s) \, \dd s
$$ 
is   a  $\tilde \PP_x$-local martingale for every $x>0$.
\end{enumerate}

\end{lemma}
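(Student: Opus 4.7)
The plan is to identify $\tilde X$ as the Feynman--Kac--Girsanov transform of $X$ under the $\PP_x$-martingale
$\tilde M_t := \e^{-\lambda t}\ell(X_t)\Ee_t/\ell(x)$
from \cite{BW}, so that all three assertions reduce to manipulations of this change of measure combined with Lemma~\ref{L1} and the identity $\mathcal{G}\ell(x) = (\lambda - c(x)/x)\ell(x)$, which is equivalent to the eigenrelation $\mathcal{A}h = \lambda h$.

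Part (ii) is a direct unwinding of the definition of $\tilde\PP_x$: summing over $u\in\Uu$ with $\Lambda = \{Z^u_t\in\cdot\}$ gives $\tilde\EE_x[g(\tilde X_t)] = h(x)^{-1}\e^{-\lambda t}\EE_x[\crochet{\Zz_t, gh}]$ for any nonnegative measurable $g$, and specialising $g = f/h$ yields \eqref{e:mtoY}. Together with Lemma~\ref{L1}, this also identifies the law of $\tilde X_t$ under $\tilde\PP_x$ with the law of $X_t$ under $\PP_x$ tilted by $\tilde M_t$.

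For the Markov property in (i) and the generator in (iii), I would extend the previous computation to path functionals: the induction in the proof of Lemma~\ref{L1} carries over verbatim to give
\[
\tilde\EE_x\bigl[F((\tilde X_s)_{s\leq t})\bigr] = \EE_x\bigl[F((X_s)_{s\leq t})\,\tilde M_t\bigr]
\]
for any nonnegative measurable $F$ on path space. Since $\tilde M$ is a multiplicative $\PP_x$-martingale, this identifies the path law of $\tilde X$ under $\tilde\PP_x$ as the Girsanov transform of that of $X$ under $\PP_x$, and the Markov property of $\tilde X$ follows from that of $X$. Formula \eqref{E:genY} then comes either from the generic Girsanov rule---viewing the transform as a Feynman--Kac tilt by the potential $c(x)/x - \lambda$ composed with the Doob $h$-transform by $\ell$---or by a direct PDMP computation using the rate $\tilde B$ and post-jump kernel described before the lemma and rearranging with $\mathcal{G}\ell = (\lambda - c/x)\ell$.

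Finally, for the exponential point recurrence, applying the change of measure at the hitting time $\tilde H(y) = H(y)$ (the same path functional under both laws) yields
\[
\tilde\EE_x\bigl[\e^{\varepsilon\tilde H(y)} \Ind_{\{\tilde H(y)<\infty\}}\bigr] = \frac{\ell(y)}{\ell(x)}\, L_{x,y}(\lambda - \varepsilon).
\]
By Section~3.5 of \cite{BeFK}, hypothesis \eqref{e:cc} gives $L_{x_0,x_0}(q_0)<\infty$ for some $q_0<\lambda$, and it remains to transfer this to $L_{x,y}(q)<\infty$ for arbitrary $x,y$ and some $q<\lambda$. I would do so via a strong-Markov decomposition at returns to $x_0$ before hitting $y$: irreducibility \eqref{e:assump} ensures that the Feynman--Kac weight $\EE_{x_0}[\e^{-qH(x_0)}\Ee_{H(x_0)}\Ind_{\{H(x_0)<H(y)\}}]$ of such ``forbidden'' first returns is strictly less than $1$ at $q=\lambda$, so continuity in $q$ yields a left neighbourhood of $\lambda$ on which all $L_{x,y}$ are finite, and choosing $\varepsilon$ in this neighbourhood gives the claim. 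This transfer step will be the main technical obstacle, since it requires a quantitative use of irreducibility; the remaining ingredients are essentially formal.
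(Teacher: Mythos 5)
Your proposal is correct and rests on the same tilting martingale $\tilde M_t=\e^{-\lambda t}\ell(X_t)\Ee_t/\ell(x)$ as the paper, but it is organised differently. The paper identifies only the one-dimensional marginals of $\tilde X$ (via Lemma \ref{L1} and the definition of the spine), takes the Markov property of $\tilde X$ from the explicit piecewise-deterministic description of the spine dynamics given just before the lemma, and then matches $\tilde X$ with the tilted process $Y$ already constructed in Section 5 of \cite{BW}; the exponential point recurrence and the generator formula are then imported wholesale from the proof of Theorem 2 of \cite{BeFK} and Lemmas 5.1 and 5.2(iii) of \cite{BW}. You instead establish the full pathwise absolute continuity $\tilde\EE_x[F((\tilde X_s)_{s\le t})]=\EE_x[F((X_s)_{s\le t})\tilde M_t]$ by carrying path functionals through the induction of Lemma \ref{L1} (this extension is indeed routine, and is the same device the paper uses for Lemma \ref{L1'}), and you deduce the Markov property, the generator and the hitting-time identity from this single Girsanov-type relation. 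This is a legitimate and more self-contained route; your renewal decomposition at returns to $x_0$, with the numerator controlled by $N_{x_0,y}(q)L_{y,x_0}(q)\le L_{x_0,x_0}(q)<\infty$ and the denominator kept away from $0$ by irreducibility and continuity in $q$, is essentially the argument carried out in the cited references, so the ``main technical obstacle'' you identify is real but surmountable by exactly the means you sketch.

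One point to tighten: the identity $\tilde\EE_x\bigl[\e^{\varepsilon\tilde H(y)}\Indic{\tilde H(y)<\infty}\bigr]=\frac{\ell(y)}{\ell(x)}L_{x,y}(\lambda-\varepsilon)$ controls the exponential moment only on the event $\{\tilde H(y)<\infty\}$, whereas the lemma asserts $\EE_x[\exp(\varepsilon\tilde H(y))]<\infty$, which in particular forces $\tilde H(y)<\infty$ almost surely. Setting $\varepsilon=0$ reduces this to showing $\frac{\ell(y)}{\ell(x)}L_{x,y}(\lambda)=1$; the inequality $\le 1$ follows from $\ell(x)=L_{x,x_0}(\lambda)\ge L_{x,y}(\lambda)L_{y,x_0}(\lambda)$ by the strong Markov property, but the reverse inequality requires a uniform integrability argument for the stopped martingale $\tilde M_{t\wedge H(y)}$ (this is precisely the content of Lemma 5.2 of \cite{BW}). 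This is not a wrong step, merely a missing one, and it is within reach of the tools you already deploy, but it must be addressed for part (i) to be complete.
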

\begin{proof}  It follows immediately from the definition of the spine and the many-to-one formula of Lemma \ref{L1} that for every $t\geq 0$, the law of $\tilde X_t$ under $\tilde \PP_x$ is absolutely continuous with respect to that of $X_t$ under $\PP_x$, with density $\e^{-\lambda t}  \Ee_t \ell(X_t)/\ell(X_0)$.
Recall that the latter is a $\PP_x$-martingale, which  served in Section 5 of \cite{BW} to construct a point-recurrent Markov $Y=(Y_t,  t\geq 0)$ by probability tilting. Hence $Y$ has  the same one-dimensional marginals as $\tilde X$,
and since the two processes are Markovian, they have the same law. 

The claim that $\tilde X$ (that is equivalently, $Y$)  is exponentially point recurrent, stems from the proof of Theorem 2 of \cite{BeFK} and Lemma 5.2(iii) of \cite{BW}. The many-to-one formula \eqref{e:mtoY} merely rephrases the very definition of the spine. 
Finally, the third claim about the infinitesimal  generator follows from Lemma 5.1 in \cite{BW}.
\end{proof}

\begin{remark}
The description of the dynamics governing the evolution of the spine entails that
its infinitesimal generator can also be expressed by
\begin{eqnarray}\label{E:gentilde}
     \tilde {\mathcal G}f(x) &=& c(x) f'(x) + \frac{B(x)}{h(x)} \int_{\Pp} \left(\sum_{i=1}^{\infty}h(xp_i)f(xp_i)-f(x)\right)  \kappa(x, \dd \pp),
     \end{eqnarray}
say for any $f\in{\mathcal C}^1_c$. The agreement between
\eqref{E:genY} and \eqref{E:gentilde} can be seen from the identity
$\mathcal{G}\ell(x) = (\lambda - c(x)/x)\ell(x)$, which is proved in
Corollary 4.5(i) of \cite{BW}.
\end{remark}
 We readily deduce from Lemma \ref{L20} that the intensity measure of the growth-fragmentation satisfies the Malthusian behavior \eqref{E:Malthus} uniformly on compact sets.

\begin{corollary}\label{C1}  For every compact set $K\subset (0,\infty)$  and every continuous function $f$ with $\|f/h\|_{\infty}<\infty$, we have
$$\lim_{t\to \infty} \e^{-\lambda t}\EE_x[\crochet{ \Zz_t,f}]= h(x) \crochet{\nu,f} \qquad \text{uniformly for }x\in K,$$
where the asymptotic profile  is given by $\nu=h^{-1}\pi$, with $\pi$ 
the unique stationary law of the spine process $\tilde X$.
\end{corollary}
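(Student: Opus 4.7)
The plan is to reduce the claim to an ergodic convergence for the spine process $\tilde X$ via the many-to-one formula \eqref{e:mtoY}, which immediately gives
$$\e^{-\lambda t}\EE_x[\crochet{\Zz_t,f}] = h(x)\,\tilde\EE_x[\phi(\tilde X_t)], \qquad \phi := f/h.$$
By hypothesis $\phi$ is bounded and continuous, and by \eqref{e:bornh} together with positivity of $h$ on $(0,\infty)$, the factor $h(x)$ is bounded above and bounded away from $0$ on $K$. The corollary will therefore follow once I establish
$$\tilde\EE_x[\phi(\tilde X_t)] \longrightarrow \crochet{\pi,\phi} = \crochet{\nu,f} \qquad \text{as }t\to\infty,\ \text{uniformly in }x\in K,$$
and the prefactor $h(x)$ then supplies the $h(x)$ in the conclusion.

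For the uniform ergodic convergence I intend to run a regeneration argument at a fixed reference point $y_0>0$, leveraging the exponential point recurrence of $\tilde X$ granted by \autoref{L20}(i). Let $\tilde H = \tilde H(y_0)$. A short strong-Markov argument, propagating the pointwise bound $\tilde\EE_x[\e^{\varepsilon \tilde H}]<\infty$ along the flow, yields
$$\sup_{x\in K}\tilde\PP_x[\tilde H>s]\leq C\e^{-\varepsilon s},\qquad s\geq 0,$$
for some $C,\varepsilon>0$. Meanwhile, since $\tilde X$ is positive recurrent with unique stationary law $\pi$ and inherits aperiodicity from the assumption \eqref{e:assump} on $X$ (via the identification of $\tilde X$ with the process $Y$ of Section~5 of \cite{BW} noted in the proof of \autoref{L20}(i)), classical ergodic theory for aperiodic positive recurrent Markov processes gives the single-point convergence
$$g(u) := \tilde\EE_{y_0}[\phi(\tilde X_u)] \longrightarrow \crochet{\pi,\phi}, \qquad u\to\infty.$$

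The strong Markov property at $\tilde H$ then provides the decomposition
$$\tilde\EE_x[\phi(\tilde X_t)] = \tilde\EE_x\bigl[\phi(\tilde X_t)\,;\,\tilde H>t\bigr] + \tilde\EE_x\bigl[g(t-\tilde H)\,;\,\tilde H\leq t\bigr].$$
The first term is dominated by $\|\phi\|_\infty\,\tilde\PP_x[\tilde H>t]$ and so vanishes uniformly on $K$ by the tail bound above. For the second, given $\eta>0$ I pick $T$ with $|g(u)-\crochet{\pi,\phi}|\leq\eta$ for all $u\geq T$ and split according to whether $\tilde H\leq t-T$ or $t-T<\tilde H\leq t$: the first sub-event contributes within $\eta$ of $\crochet{\pi,\phi}\,\tilde\PP_x[\tilde H\leq t-T]$, while the second is again controlled by $2\|\phi\|_\infty\,\tilde\PP_x[\tilde H>t-T]$, uniformly on $K$. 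Letting $t\to\infty$ and then $\eta\downarrow 0$ finishes the argument.

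The principal technical obstacle is the passage from the pointwise exponential moment of $\tilde H$ supplied by \autoref{L20}(i) to uniform exponential tails across the compact $K$; this is where some care is needed, but it follows from a standard Markov-chain argument comparing any starting point in $K$ with $y_0$ by means of a bounded number of regeneration steps. The remaining steps are a routine renewal-style decomposition.
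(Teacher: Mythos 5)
Your reduction via the many-to-one formula \eqref{e:mtoY} with $\phi=f/h$ and the identification $\crochet{\nu,f}=\crochet{\pi,\phi}$ is exactly the paper's starting point, and your overall argument is correct; but the route to \emph{uniformity} over $K$ is genuinely different. The paper never introduces the hitting time $\tilde H(y_0)$: it chooses a finite grid $b=x_1<\dots<x_j=b'$ so fine that, with probability at least $1-\varepsilon$, the process started at any $x\in(x_i,x_{i+1})$ reaches $x_{i+1}$ (and, started at $x_i$, reaches $x$) by following the flow \eqref{e:ode} without jumping in time less than $1$; the simple Markov property then sandwiches $\tilde\EE_x[g(\tilde X_t)]$ between $(1-\varepsilon)\tilde\EE_{x_{i+1}}[g(\tilde X_{t-s(x,x_{i+1})})]$ and $(1-\varepsilon)^{-1}\tilde\EE_{x_i}[g(\tilde X_{t+s(x_i,x)})]$, so uniform convergence on $K$ reduces to pointwise ergodic convergence at finitely many points. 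Your regeneration decomposition at $y_0$ buys the same conclusion but at the price of the uniform exponential tail $\sup_{x\in K}\tilde\PP_x[\tilde H>s]\le C\e^{-\varepsilon s}$, which you correctly flag as the technical crux and only sketch. Be aware that the "standard" way to get it is essentially the paper's own flow-comparison: for $x\in(x_i,x_{i+1})$ one bounds $\tilde\EE_x[\e^{\varepsilon_0\tilde H(y_0)}]\le(1-\varepsilon)^{-1}\tilde\EE_{x_i}[\e^{\varepsilon_0\tilde H(y_0)}]$ by conditioning on the event that the process from $x_i$ flows to $x$ without jumping (a generic "bounded number of regeneration steps" argument does not work directly here, since after a jump the process may land anywhere in $(0,x)$, outside any compact you control). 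So your proof is valid, but it uses strictly more machinery (exponential point recurrence, strong Markov at a hitting time) than the paper's, and its one deferred step secretly contains the paper's key trick; the paper's version is the more elementary of the two, while yours has the advantage of quantifying the rate of convergence if the single-point convergence $g(u)\to\crochet{\pi,\phi}$ is itself quantitative. One cosmetic point: you do not need $h$ bounded away from $0$ on $K$, only bounded above, since you multiply by $h(x)$ rather than divide.
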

\begin{proof} Suppose $K\subset [b,b']$ for some $0<b<b'<\infty$, and fix $ \varepsilon <\in (0,1)$.
For every $0<x<y$, let $s(x,y)$ denote the instant when the flow velocity \eqref{e:ode} started from $x$ reaches $y$. 
  
Since the total jump rate $\tilde B$ of $\tilde X$ remains bounded on $K$ and the growth rate $c$ is bounded away from $0$ on $K$, we can find a finite sequence $b=x_1< x_2 < \ldots < x_j=b'$ 
such that for every $i=1, \ldots, j-1$ and every $x\in(x_i, x_{i+1})$:
$$s(x,x_{i+1})<1 \text{ and } s(x_i,x) < 1,$$
as well as
$$\tilde \PP_{x}(\tilde H(x_{i+1})=s(x,x_{i+1})) >1-\varepsilon \quad \text{and}\quad  \tilde \PP_{x_i}(\tilde H(x)=s(x_i,x)) >1-\varepsilon. $$
An immediate application of the simple Markov property now shows that for every $i=1, \ldots, j-1$, every $x\in(x_i, x_{i+1})$, every $t\geq 1$, and every nonnegative measurable function $g$, the following bounds hold
\[ 
  (1-\varepsilon) \tilde \EE_{x_{i+1}}(g(\tilde X_{t-s(x,x_{i+1})}))
  \le
   \tilde \EE_{x}(g(\tilde X_{t}))
  \le
  (1-\varepsilon)^{-1} \tilde \EE_{x_i}(g(\tilde X_{t+s(x_i,x)}))
\]

On the other hand, we know that  $\tilde X$ is irreducible, aperiodic and ergodic, with stationary law $\pi$  (recall Lemma \ref{L20}(i)). Since $\varepsilon$ can be chosen arbitrarily small, it follows from above  that $\tilde X$ is uniformly ergodic on $K$, in the sense that for every continuous and bounded function $g$, 
$$\lim_{t\to \infty} \tilde \EE_{x}(g(\tilde X_{t}))= \crochet{\pi, g} \qquad \text{uniformly for }x\in K.$$
We conclude the proof with an appeal to the many-to-one formula  of Lemma \ref{L20} (ii), taking $g=f/h$.
\end{proof} 

 The next lemma is a cornerstone of the proof of Theorem \ref{T1}.
\begin{lemma}\label{L2} 
We have:
\begin{enumerate}
\item There exists $a<\infty$ such that, for all $x>0$ and $t\geq 0$, 
$$\tilde \EE_x[1/\ell(\tilde X_t)]\leq  at + 1/\ell(x).$$

\item There exists some $\lambda'<\lambda$ such that, for all $x>0$,
$$\lim_{t\to \infty} \e^{-\lambda' t} \tilde X_t=0\quad \text{ in }
L^{\infty}(\tilde \PP_x).$$
\end{enumerate} 
\end{lemma}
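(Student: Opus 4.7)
The plan is to treat the two parts separately: part (i) via Dynkin's formula applied to the explicit test function $g = 1/\ell$, and part (ii) via a pathwise comparison with the deterministic flow.

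For part (i), the key idea is to take $g(x) = 1/\ell(x)$ in the generator formula of Lemma \ref{L20}(iii). Because $g\ell \equiv 1$ is (trivially) continuously differentiable with $\mathcal{G}(g\ell) = 0$, formula \eqref{E:genY} simplifies to
\[
\tilde{\mathcal{G}}(1/\ell)(x) = \frac{c(x)/x - \lambda}{\ell(x)}.
\]
I then verify that this expression is uniformly bounded above by some $a < \infty$. Under \eqref{e:cc}, there exist $0 < x_- < x_+ < \infty$ with $c(x)/x \leq \lambda$ for $x \notin [x_-, x_+]$; on that unbounded region $\tilde{\mathcal{G}}(1/\ell) \leq 0$, so the behaviour of $\ell$ near $0$ or $\infty$ is irrelevant. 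On the compact interval $[x_-, x_+]$, the function $\ell$ is continuous and strictly positive, hence bounded below, while $c(x)/x$ is bounded by $\gamma := \sup_y c(y)/y < \infty$, so one can take $a = (\gamma - \lambda)^+ / \inf_{[x_-, x_+]} \ell$. A standard localization of the local martingale from Lemma \ref{L20}(iii), using any reducing sequence $T_n \uparrow \infty$, then gives
\[
\tilde \EE_x\!\left[\frac{1}{\ell(\tilde X_{t\wedge T_n})}\right] = \frac{1}{\ell(x)} + \tilde \EE_x\!\left[\int_0^{t\wedge T_n} \tilde{\mathcal{G}}(1/\ell)(\tilde X_s)\,\dd s\right] \leq \frac{1}{\ell(x)} + at,
\]
and since $1/\ell \geq 0$, Fatou's lemma lets us pass to the limit $n\to\infty$ to obtain the claimed bound.

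For part (ii), I would start from a pathwise comparison. Between jumps $\tilde X$ solves the flow equation \eqref{e:ode}, and every jump multiplies it by some $r \in (0,1)$; since uniqueness of solutions to \eqref{e:ode} implies strict monotonicity of the flow in its initial condition, a simple induction on jump times yields $\tilde X_t \leq x(t)$ for all $t \geq 0$, $\tilde \PP_x$-almost surely, where $x(\cdot)$ denotes the flow started from $x$. The remaining step is to turn the assumption \eqref{e:cc} into exponential control of the flow. Fix $\beta \in (\limsup_{y\to\infty} c(y)/y,\ \lambda)$ and choose $x_* > 0$ large enough that $c(y)/y \leq \beta$ for all $y \geq x_*$. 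Because the flow is non-decreasing, once $x(t)$ exceeds $x_*$---immediately if $x \geq x_*$, or after a finite transient if $x < x_*$---one has $\dot x(t)/x(t) \leq \beta$, from which $x(t) \leq \max(x, x_*) \e^{\beta t}$ for all $t \geq 0$. Picking any $\lambda' \in (\beta, \lambda)$, we conclude
\[
\lVert \e^{-\lambda' t} \tilde X_t \rVert_{L^\infty(\tilde \PP_x)} \leq \max(x, x_*)\, \e^{(\beta - \lambda') t} \longrightarrow 0.
\]

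The main subtlety I anticipate is in part (i): the test function $1/\ell$ need not be bounded, so one must be careful with the localization/Fatou step, and one must exploit the \emph{sign} of $c(x)/x - \lambda$ outside a compact interval---which is exactly what \eqref{e:cc} provides---to offset the fact that $\ell$ need not be bounded away from $0$ at the boundaries. Part (ii) is essentially deterministic once one notices that the relevant growth rate for the flow is the \emph{asymptotic} $\limsup_{y\to\infty} c(y)/y < \lambda$, rather than the crude supremum $\gamma$, which may well exceed $\lambda$.
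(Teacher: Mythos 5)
Your proposal is correct and follows essentially the same route as the paper: part (i) applies the generator formula of Lemma \ref{L20}(iii) to $g=1/\ell$ and bounds $\tilde{\mathcal G}(1/\ell)$ by a constant using the sign of $c(x)/x-\lambda$ outside a compact set, and part (ii) dominates $\tilde X$ pathwise by the flow and uses $\limsup_{x\to\infty}c(x)/x<\lambda$. Your write-up merely fills in details the paper leaves implicit (the explicit constant $a$ and the localization/Fatou step in place of the paper's appeal to optional sampling).
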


\begin{proof}  (i) We apply Lemma \ref{L20}(iii)
to $g=1/\ell$, with 
   $$\tilde {\mathcal G}\left(\frac{1}{\ell}\right) (x)= (c(x)/x-\lambda)/\ell(x).$$
Our assumption \eqref{e:cc} ensures that the right-hand side above is negative for all $x$ aside from some compact subset of $(0,\infty)$. 
Taking $a=\sup_{x>0} \tilde {\mathcal G}\left(1/{\ell}\right) <\infty$, 
we deduce from Lemma \ref{L20}(iii) by optional sampling that
$\tilde \EE_x[1/\ell(\tilde X_t)]-a t\leq 1/\ell(x)$, which entails our claim.  

(ii)  Recall from the description of the dynamics of the spine before the statement  that $\tilde X$ increases continuously with  velocity   $c$ and has only negative jumps. As a consequence, $\tilde X$ is bounded from above by the solution to the flow velocity
   \eqref{e:ode}. One readily deduces that $\lim_{t\to \infty} \e^{-\lambda' t}x(t)=0$ for every $\lambda' >\limsup_{x\to \infty} c(x)/x$, and since $\limsup_{x\to \infty} c(x)/x<\lambda$ according to our standing assumption \eqref{e:cc}, this establishes our claim. 
\end{proof}

     We now have all the ingredients needed to prove Theorem \ref{T1}
     
        \begin{proof}[Proof of Theorem \ref{T1}] 
 Since the projection on ${\mathcal F}_t$ of $\tilde \PP_x$ is absolutely continuous with density $W_t/W_0$,  
 the process $W$ is bounded in $L^2(\PP_x)$  if and only if $
\sup_{t\geq 0} \tilde \EE_x [W_t] <\infty$.  We already know from Lemma \ref{L2}(ii) that $\sup_{t\geq 0} \tilde \EE_x [\e^{-\lambda t} \tilde X_t] <\infty$,
and we are thus left with checking that 
  \begin{equation}\label{e:goal}
\sup_{t\geq 0} \tilde \EE_x [W'_t] <\infty,
\end{equation} 
where 
$$W'_t=W_t-\e^{-\lambda t} \tilde X_t.$$

In this direction, it is well-known and easily checked that 
  the law of $\Zz$ under the new probability measure  $\tilde{\PP}_x$ can be constructed by the following procedure, known as the spine decomposition. After each fission event of the spine, all the daughters except the new spine
  start independent growth-fragmentation processes following the genuine dynamics of $\Zz$ under $\PP$. 
This  spine decomposition 
 enables us to estimate the conditional expectation of   $W'_t$ under $\tilde \PP_x$, given the spine and its sibling. At each time, say $s>0$, at which a fission occurs for the spine, we write
    $\pp(s)=(p_1(s), \ldots)$ for the resulting mass partition, and $I(s)$ for the index of the daughter spine. Combining this with the fact that $(W_s: s\geq 0)$ is a $\PP_y$-martingale for all $y>0$ entails the identity
     $$\tilde \EE_x\left[W'_t\mid (\tilde X_s, \pp(s), I(s))_{s\geq 0}\right] = 
     \sum_{s\in \tilde F, s\leq t} \e^{-\lambda s} \sum_{i\neq I(s)} h(\tilde X_{s-} p_i(s))\,,$$
where $\tilde F$ denotes the set of fission times of the spine. Note from \eqref{e:bornh}
that $\sum h(xp_i) \leq x \| \ell\|_{\infty}$ for every $x>0$ and every mass-partition ${\mathbf p}=(p_1, \ldots)$, so the right-hand side is bounded from above by 
$$ \| \ell \|_{\infty} \sum_{s\in \tilde F, s\leq t} \e^{-\lambda s} \tilde X_{s-},$$
   and to prove \eqref{e:goal}, we now only need to check that 
   $$\tilde \EE_x \left[ \sum_{s\in \tilde F} \e^{-\lambda s} \tilde X_{s-}\right] <\infty . $$
   
   Recall from \eqref{e:ratespine} that $\tilde {B} = w{B}/h$ describes the fission rate of the spine, and observe from
   \eqref{e:bornh} that $w(x)\leq \| \ell\|_{\infty} x$, so that
$$\tilde {B}(x) \leq  \| \ell\|_{\infty}  \| {B} \|_{\infty}
\frac{1}{\ell(x)} 
\qquad \text{for all }x>0.$$
   This entails that
   $$\tilde \EE_x \left[ \sum_{s\in \tilde F} \e^{-\lambda s} \tilde X_{s-}\right] \leq  
     \| \ell\|_{\infty}  \| {B} \|_{\infty}\tilde \EE_x\left[ \int_0^{\infty} \e^{-\lambda s} \frac{\tilde X_s}{\ell(\tilde X_s)}\dd s\right]. $$
We now see that the expectation in the right-hand side is indeed finite by  writing first
   $$\int_0^{\infty} \e^{-\lambda s} \frac{\tilde X_s}{\ell(\tilde X_s)}\dd s
   = \int_0^{\infty}  \frac{1}{\ell(\tilde X_s)} \cdot \e^{-\lambda' s}\tilde X_s
   \cdot \e^{-(\lambda-\lambda')s} \, \dd s
  $$
and then applying  Lemma \ref{L2}.    
    \end{proof}

\section{Strong Malthusian behavior}

We assume again throughout this section that the assumption \eqref{e:cc} is fulfilled.
We will prove Theorem \ref{t:main}:
the strong Malthusian behavior \eqref{E:MalthusF} then holds.
%% I have removed this theorem as it now appears in the introduction.
% \begin{theorem} \label{T2}
%  Assume \eqref{e:cc}.
%  Then for every $x>0$ and every continuous function $f$ with $\|f/h\|_{\infty}<\infty$, we have
%    \[ \lim_{t\to \infty} \e^{-\lambda t} \crochet{\Zz_t,f} = 
%    \ip{\nu}{f} W_\infty, \qquad \text{in }L^1(\PP_x)
%  \]
%  where $W_\infty$ is the terminal value of
%  the intrinsic martingale $W$.
%\end{theorem}

The proof relies on a couple of technical lemmas. Recall from Section 3 the notation $Z^u\colon  [0, d^u)\to(0,\infty)$ for the ancestral trajectory of the individual $u$, and agree for the sake of simplicity that $f(Z^u_t)=0$ for every function $f$ whenever $t\geq d^u$. 

The first lemma states  a simple tightness result.
\begin{lemma} \label{L:B} For every $x>0$ and $\varepsilon >0$, there exists a compact $K\subset (0,\infty)$ such that for all $t\geq 0$:
$$ \e^{-\lambda t}  \EE_{x}\left[\sum_{u\in\Uu: b^u\leq t < d^u }  h(Z^u_t) \Indic{Z^u_t\not\in K} 
\right]< \varepsilon.$$
\end{lemma}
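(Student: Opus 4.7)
My plan is to reduce the statement, via the many-to-one formula, to a tightness property for the one-dimensional marginals of the spine process $\tilde{X}$. Applying Lemma~\ref{L20}(ii) with the test function $y \mapsto h(y)\Indic{y \notin K}$ yields
$$ \e^{-\lambda t}\,\EE_{x}\biggl[\sum_{u\in\Uu: b^u \leq t < d^u} h(Z^u_t)\Indic{Z^u_t \notin K}\biggr] = h(x)\,\tilde{\PP}_x\bigl(\tilde{X}_t \notin K\bigr), $$
so, after dividing by $h(x)$, the claim becomes: for every $\delta>0$ there exists a compact $K \subset (0,\infty)$ with $\sup_{t\ge 0}\tilde{\PP}_x(\tilde X_t \notin K)<\delta$. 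In other words, the family of marginals $\{\mathrm{Law}_{\tilde{\PP}_x}(\tilde{X}_t): t\ge 0\}$ must be tight in $(0,\infty)$, and this is the target of the rest of the argument.

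For large $t$ I would use ergodicity of the spine. By Lemma~\ref{L20}(i), $\tilde{X}$ is exponentially point-recurrent, and together with the aperiodicity inherited from \eqref{e:assump} this yields a unique invariant probability $\pi$ on $(0,\infty)$ together with the weak convergence $\tilde{X}_t \Rightarrow \pi$ under $\tilde{\PP}_x$. Tightness of the single probability $\pi$ then supplies, via a standard Portmanteau argument applied to a slight open enlargement of a compact $\pi$-support, a compact $K_1 \subset (0,\infty)$ and $T_0<\infty$ such that $\tilde{\PP}_x(\tilde{X}_t \notin K_1)<\delta/2$ for all $t\ge T_0$.

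For the complementary range $t\in[0,T_0]$, the upper tail is controlled immediately by the flow-velocity bound $\tilde{X}_t \leq x(T_0)$, which holds because $\tilde{X}$ rises according to \eqref{e:ode} and only jumps downwards. The lower tail I would handle by arguing that $t \mapsto \mathrm{Law}(\tilde X_t)$ is continuous in the weak topology on probability measures over $(0,\infty)$: indeed, $\tilde{X}$ is càdlàg with no fixed times of discontinuity (its jump times, governed by a bounded rate, have diffuse distributions), so $\tilde X_t \to \tilde X_s$ in probability as $t\to s$. The continuous image of the compact interval $[0,T_0]$ is then weakly compact and hence tight, producing a compact $K_2\subset(0,\infty)$ which controls all marginals up to $T_0$. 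Taking $K = K_1 \cup K_2$ completes the argument.

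The step I expect to be the most delicate is guaranteeing that the tightness lives in $(0,\infty)$ rather than in its one-point compactification $[0,\infty]$. The ergodic argument alone does not prevent short-time marginals from leaking mass towards $0$, which is why the separate analysis on $[0,T_0]$ via continuity of the marginals is essential in addition to the flow-velocity upper bound; everything else reduces to the now-familiar dictionary between functionals of the growth-fragmentation and functionals of the spine.
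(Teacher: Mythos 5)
Your proof is correct and follows essentially the same route as the paper: the spine/many-to-one identity reduces the claim to tightness of the one-dimensional marginals of $\tilde X$ under $\tilde \PP_x$, which the paper deduces directly from the positive recurrence established in Lemma~\ref{L20}(i). Your two-regime elaboration (ergodic convergence to $\pi$ for large $t$; weak continuity of $t\mapsto \mathrm{Law}(\tilde X_t)$ together with the flow upper bound on a compact time interval) merely supplies the standard details that the paper leaves implicit.
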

\begin{proof} From the very definition of the spine $\tilde X$, there is the identity
$$ \e^{-\lambda t}  \EE_{x}\left[ \sum_{u\in\Uu: b^u\leq t < d^u }  h(Z^u_t) \Indic{Z^u_t\not\in K} 
\right] = h(x) \tilde \PP_{x}\left[ \tilde X_t\not\in K\right].$$
Recall from Lemma \ref{L20}(i) that $\tilde X$ is positive recurrent; as a consequence the family of its one-dimensional marginals under $\tilde \PP_{x}$ is tight, which entails our claim. 
\end{proof}
The second lemma reinforces the boundedness in $L^2$ of  the intrinsic martingale, cf.\  Theorem \ref{T1}.  

\begin{lemma} \label{L:C} For every compact set $K\subset(0,\infty)$, we have
$$ \sup_{x\in K} \sup_{t\geq 0} \EE_x\left[ W^2_t \right] < \infty.$$
\end{lemma}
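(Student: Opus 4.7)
The plan is to revisit the proof of Theorem~\ref{T1} step by step and verify that every estimate appearing there can be made uniform in $x \in K$. The key identity is the change of measure
\[
  \EE_x[W_t^2] = W_0 \, \tilde\EE_x[W_t] = h(x) \, \tilde\EE_x[W_t],
\]
so since $h$ is continuous on $(0,\infty)$ and hence bounded above on $K$, it suffices to show that $\sup_{x \in K}\sup_{t \geq 0} \tilde\EE_x[W_t] < \infty$. As in the proof of Theorem~\ref{T1}, I decompose $W_t = \e^{-\lambda t}\tilde X_t + W'_t$ and treat the two pieces separately.

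For the first piece, I would use that the spine $\tilde X$ grows continuously according to the flow velocity \eqref{e:ode} and jumps only downwards, so that $\tilde X_t \leq x_{\max}(t)$ a.s., where $x_{\max}(\cdot)$ is the flow-velocity trajectory started from $\sup K < \infty$. Picking $\lambda' \in (\limsup_{x \to \infty} c(x)/x,\lambda)$ as in the proof of Lemma~\ref{L2}(ii), the function $t \mapsto \e^{-\lambda' t} x_{\max}(t)$ tends to $0$ and in particular is bounded in $t$; consequently
\[
  \sup_{x \in K}\sup_{t \geq 0}\tilde\EE_x[\e^{-\lambda t}\tilde X_t]
  \leq \sup_{t \geq 0}\bigl(\e^{-\lambda' t} x_{\max}(t)\bigr)\cdot \sup_{t \geq 0} \e^{-(\lambda-\lambda')t} < \infty.
\]

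For the second piece, I would repeat the spine decomposition argument from the proof of Theorem~\ref{T1} to obtain
\[
  \tilde\EE_x[W'_t] \leq \|\ell\|_\infty \|B\|_\infty
  \int_0^\infty \e^{-(\lambda-\lambda')s}\, \tilde\EE_x\!\left[\frac{\e^{-\lambda' s}\tilde X_s}{\ell(\tilde X_s)}\right]\dd s.
\]
Bounding $\e^{-\lambda' s}\tilde X_s \leq \e^{-\lambda' s}x_{\max}(s) \leq C(K)$ uniformly in $x \in K$ and $s \geq 0$, and then applying Lemma~\ref{L2}(i) to get $\tilde\EE_x[1/\ell(\tilde X_s)] \leq as + 1/\ell(x)$, the integral is bounded above by
\[
  C(K)\int_0^\infty \e^{-(\lambda-\lambda')s}\Bigl(as + \sup_{x \in K} 1/\ell(x)\Bigr)\dd s,
\]
which is finite because $\lambda' < \lambda$ and because $\ell$ is continuous and strictly positive, hence bounded away from $0$ on $K$.

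There is no serious obstacle here: the content of the lemma is essentially a \emph{bookkeeping} exercise, checking that the constants produced in Theorem~\ref{T1} depend on $x$ only through the continuous functions $h(x)$, $1/\ell(x)$, and the trivial monotone comparison $x \leq \sup K$ for the flow. The only point that requires any care is to observe that the Lemma~\ref{L2}(ii) bound is uniform for $x \in K$, which follows from the pathwise domination $\tilde X_t \leq x_{\max}(t)$ rather than from any averaging.
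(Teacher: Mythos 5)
Your proof is correct, but it takes a genuinely different route from the paper's. You re-open the proof of Theorem \ref{T1} and check that every constant there can be made uniform over $x\in K$: the identity $\EE_x[W_t^2]=h(x)\,\tilde\EE_x[W_t]$ reduces matters to the tilted expectation, the spine is dominated pathwise by the flow started from $\sup K$ (which makes the Lemma \ref{L2}(ii)-type bound uniform, as you correctly note), and the drift bound of Lemma \ref{L2}(i) is uniform once $1/\ell$ is bounded on $K$ by continuity and positivity of $\ell$. All of these observations are sound, and the argument goes through (modulo the same harmless bookkeeping already present in the paper's proof of Theorem \ref{T1}, e.g.\ the extra factor $\e^{-\lambda t}(h(\tilde X_t)-\tilde X_t)$ hidden in $W'_t$, which is controlled by $\|\ell\|_\infty\e^{-\lambda t}\tilde X_t$ exactly as in the first piece). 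The paper instead uses Theorem \ref{T1} as a black box at the single point $b=\inf K$: writing $K=[b,b']$, it conditions on the event $\Lambda_x$ that Eve drifts from $b$ up to $x$ without fissioning, which takes a bounded time $s(x)\le s(b')$ and has probability bounded below by $\exp(-\int_b^{b'}B(y)/c(y)\,\dd y)>0$; on that event $\Zz_{s(x)+\cdot}$ is a copy of $\Zz$ under $\PP_x$, whence $\sup_t\EE_x[W_t^2]\le C(K)\,\EE_b[W_\infty^2]$ uniformly. The paper's coupling is shorter and insulated from the internal structure of the $L^2$ proof; your version is longer but yields a more explicit, quantitative bound and would survive if the $L^2$ bound were proved by other means at no single privileged starting point.
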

\begin{proof} We may assume that $K=[b,b']$ is a bounded interval. For any $x\in(b,b']$, we write  $s(x)$ for the time when the flow velocity \eqref{e:ode}  started from $b$ reaches $x$. We work under 
$\PP_{b}$ and consider the event $\Lambda_x$ that the Eve individual  hits $x$ before a fission event occurs. 

We have on the one hand, that the law of $\Zz_{s(x)+t}$ conditionally on 
$ \Lambda_x$ is the same as that of $\Zz_t$ under $\PP_x$. 
In particular, the law of $W_t$ under $\PP_x$ is the same as that of $\e^{\lambda s(x)} W_{s(x)+t}$ under 
$\PP_b[\cdot \mid \Lambda_x]$, and thus
$$\sup_{t\geq 0} \EE_x\left[ W^2_t \right] \leq \frac{\e^{\lambda s(x)}}{\PP_b[\Lambda_x]}  \EE_b\left[ W^2_{\infty} \right].$$

On the other hand, for every $x\in(b,b']$, we have $s(x)\leq s(b')<\infty$ and 
  $$ \PP_b[\Lambda_x]\geq \PP_b[\Lambda_{b'}]=\exp\left(-\int_{b}^{b'} \frac{{B}(y)}{c(y)}\dd y\right)>0,$$
   and  our claim is proven. 
\end{proof}
We have now all the ingredients to prove Theorem \ref{t:main}.

\begin{proofthm}{Proof of Theorem \ref{t:main}}
We suppose that $0\leq f \leq h$, which induces of course no loss of generality. Our aim is to check that 
$\e^{-\lambda(t+s)} \crochet{\Zz_{t+s},f} $ is arbitrarily close to $\crochet{\nu,f}W_t $ in $L^1(\PP_x)$ when $s$ and $t$ are sufficiently  large. In this direction, recall that  ${\mathcal F}_t$ denotes  the natural filtration generated by $\Zz_t$ and use
 the branching property at time $t$ to express the former quantity as  
 $$ \e^{-\lambda (t+s)}\crochet{\Zz_{t+s},f}= 
 \sum_{u\in\Uu: b^u\leq t < d^u}\e^{-\lambda t} h(Z^u_t) \cdot \frac{1}{h(Z^u_t)}\e^{-\lambda s}\crochet{\Zzu_{s},f},
$$
where conditionally on ${\mathcal F}_t$, the processes $\Zzu$ are independent versions of the growth-fragmentation $\Zz$ started from $Z^u_t$.

Fix $\varepsilon>0$.
To start with, we choose a compact subset $K\subset (0,\infty)$
as in Lemma \ref{L:B}, and restrict the sum above to individuals $u$ with $Z^u_t\not \in K$.
Observe first that, since $\crochet{\Zzu_s, f}\leq \crochet{\Zzu_s, h}$ and $h$ is $\lambda$-harmonic, taking the conditional expectation given ${\mathcal F}_t$ yields
\begin{multline*}
\EE_x\!\left[  \sum_{\substack{u\in\Uu, \\ b^u\leq t < d^u}}\e^{-\lambda t} h(Z^u_t) \Indic{Z^u_t\not \in K}\cdot \frac{1}{h(Z^u_t)}\e^{-\lambda s}\crochet{\Zzu_{s},f}\right]
 \leq \EE_x\!\left[  \sum_{\substack{u\in\Uu, \\ b^u\leq t < d^u}}\e^{-\lambda t} h(Z^u_t) \Indic{Z^u_t\not \in K}\right].
\end{multline*}
From the very choice of $K$, there is the bound
\begin{equation}\label{E:P1}
\EE_x\left[  \sum_{u\in\Uu: b^u\leq t < d^u}\e^{-\lambda t} h(Z^u_t) \Indic{Z^u_t\not \in K}\cdot \frac{1}{h(Z^u_t)}\e^{-\lambda s}\crochet{\Zzu_{s},f}\right]\leq \varepsilon.
\end{equation}

Next, recall from Lemma \ref{L:C} that 
$$C(K)\coloneqq 
 \sup_{y\in K} \sup_{s\geq 0} \EE_y\left[ W_s^2 \right] < \infty,
 $$
 and consider 
  $$A(u,t,s)=\frac{1}{h(Z^u_t)}\e^{-\lambda s}\crochet{\Zzu_{s},f} $$
 together with  its conditional expectation given ${\mathcal F}_t$ 
 $$a(u,t,s)=\EE_x[A(u,t,s)\mid {\mathcal F}_t].$$
 Again, since $0\leq f \leq h$, 
 for every $u$ with $Z^u_t\in K$, we have 
$$\EE_x[(A(u,t,s)-a(u,t,s))^2\mid  {\mathcal F}_t] \leq 4 C(K). 
$$

Since conditionally on ${\mathcal F}_t$, the variables $A(u,t,s)-a(u,t,s)$ for $u\in{\mathcal U}$ are independent and centered, there is the identity 
\begin{multline*}
  \EE_x\left[  \left| \sum_{u\in\Uu: b^u\leq t < d^u}\e^{-\lambda t} h(Z^u_t) \Indic{Z^u_t \in K}\cdot (A(u,t,s)-a(u,t,s))\right |^2
 \right]\\
  = \EE_x\left[  \sum_{u\in\Uu: b^u\leq t < d^u}\e^{-2\lambda t} h^2(Z^u_t) \Indic{Z^u_t \in K}\cdot \EE_x\bigl[(A(u,t,s)-a(u,t,s))^2\bigm\vert  {\mathcal F}_t\bigr]
  \right],
\end{multline*} 
 and we deduce from above that this quantity is bounded from above by 
 $$4 C(K)\e^{-\lambda t} h(x) \max_{y\in K} h(y).$$
 This upper-bound  tends to $0$ as $t\to \infty$, and it thus holds  that 
$$\EE_x\left[  \left| \sum_{u\in\Uu: b^u\leq t < d^u}\e^{-\lambda t} h(Z^u_t) \Indic{Z^u_t \in K}\cdot (A(u,t,s)-a(u,t,s))\right |
 \right]< \varepsilon$$
  for all $t$ sufficiently large. 

On the other hand, writing $y=Z^u_t$, we have from the branching property
$$a(u,t,s)=  \frac{1}{h(y)} \e^{-\lambda s} \EE_{y} \left[\crochet{\Zz_{s},f}\right],$$
and Corollary \ref{C1} entails that for all $s$ sufficiently large, 
$|a(u,t,s)-\crochet{\nu,f}|\leq \varepsilon$ for all individuals $u$ with $Z^u_t\in K$. Using the bound \eqref{E:P1} with $h$ in place of $f$ for individuals $u$ with $Z^u_t\not \in K$ and
putting the pieces together, we have shown that for all $s, t$ sufficiently large,
$$\EE_x\left[ \left|    \e^{-\lambda (t+s)}\crochet{\Zz_{t+s},f} - \crochet{\nu,f} W_t
\right | \right] \leq (2+h(x))\varepsilon,
$$
which completes the proof. 
\end{proofthm}

\section{Explicit conditions for the strong Malthusian behavior}

The key condition for strong Malthusian behavior, \eqref{e:cc}, is given in terms of the Malthus exponent $\lambda$, which is  not known explicitly in general. In this final section, we discuss explicit criteria in terms of the characteristics of $\Zz$ ensuring that $\lambda > 0$, so that condition \eqref{e:cc} then immediately follows from the simpler requirement 
$$\lim_{x\to 0+}c(x)/x = \lim_{x\to \infty}c(x)/x =0.$$
In this direction, we recall first that Theorem 1 in \cite{DoumGab} already gives sufficient conditions for the strict positivity of the leading eigenvalue in the eigenproblem \eqref{E:eigenp}. More recently, it has been pointed out in Proposition 3.4(ii) of \cite{BeFK} that if the Markov process $X$ is recurrent, and the uninteresting case when $c(x)=ax$ is a linear function excluded,
then $\lambda > \inf_{x>0}c(x)/x$.
(If $c(x) = ax$, then $\lambda=a$, $h(x)=x$, and one readily checks that the martingale $W$ is actually constant.)
It was further argued in Section 3.6  in \cite{BeFK}  that sufficient conditions warranting recurrence for $X$ are easy to formulate. 
For instance, it suffices that there exist some $q_{\infty}>0$ and $x_{\infty}>0$ such that
$$
q_{\infty} c(x)/x + \int_{(0,1)} (r^{q_{\infty}}-1)\bar k(x,\dd r) \leq 0 \qquad\text{ for all }x\geq x_{\infty},
$$
and also some $q_0>0$ and $x_0>0$ such that 
$$-q_0 c(x)/x + \int_{(0,1)} (r^{-q_0}-1) \bar k (x,\dd r) \leq 0 \qquad\text{ for all }x\leq x_0.
$$
% (Note that $B$ was assumed to be bounded in \cite{BeFK}.)

In this section, we shall show that a somewhat weaker condition actually suffices. For the sake of simplicity, we focus on the situation when fissions are binary (which puts $\Zz$ in the class of Markovian growth-fragmentations defined in \cite{BeGF}). However, it is immediate to adapt the argument to the general case.

Assume that, for all $x>0$,  $\kappa(x, \dd \pp)$ is supported by the set of binary  mass  partitions $\pp=(1-r,r, 0, \ldots)$ with $r\in(0,1/2]$.  It is then more convenient to represent the fission kernel $\kappa$ by a probability kernel $\varrho(x,\dd r)$ on $(0,1/2]$,  such that for all functionals $g\geq 0$ on $\Pp$, 
$$\int_{\Pp} g(\pp) \kappa(x, \dd \pp) = \int_{(0,1/2]} g(1-r,r, 0, \ldots) \varrho(x,\dd r).$$
In particular, there is the identity
$$\int_{(0,1)} f(r) \bar k (x,\dd r)= B(x)\int_{(0,1/2]} ((1-r) f(1-r)+ r f(r)) \varrho(x,\dd r).$$

\begin{proposition}\label{P1} In the notation above, assume that
there exist $q_{\infty}, x_{\infty}>0$ such that
% $x^{-q_\infty} B(x) \to 0$ as $x\to\infty$ and that
$$
q_{\infty} c(x)/x + B(x) \int_{(0,1/2]} (r^{q_{\infty}}-1) \varrho (x, \dd r) \leq 0 \qquad\text{ for all }x\geq x_{\infty},
$$
and $q_0, x_0>0$ such that
$$-q_0 c(x)/x + B(x) \int_{(0,1/2]} ((1-r)^{-q_0}-1) \varrho(x,\dd r) \leq 0 \qquad\text{ for all }x\leq x_0.
$$
Then, the Malthus exponent $\lambda$ is positive. 
\end{proposition}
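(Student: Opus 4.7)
The plan is to show that $L_{x_0, x_0}(0) > 1$; combined with $L_{x_0, x_0}(\lambda) = 1$ from \eqref{e:assump:malthus} and the monotonicity and convexity of $q \mapsto L_{x_0, x_0}(q)$, this immediately yields $\lambda > 0$. As a first reduction, I would argue that point recurrence of $X$ at $x_0$ (i.e., $\PP_{x_0}[H(x_0)<\infty]=1$) suffices: writing $\Ee_t = 1 + \int_0^t \Ee_s \,(c(X_s)/X_s) \,\dd s$ and taking expectations at $H(x_0)$ gives
\[
  L_{x_0,x_0}(0) = \EE_{x_0}[\Ee_{H(x_0)}] = 1 + \EE_{x_0}\biggl[\int_0^{H(x_0)} \Ee_s \,\frac{c(X_s)}{X_s}\, \dd s\biggr] > 1,
\]
the strict inequality following from $c > 0$ pointwise and $H(x_0) > 0$ almost surely.

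The bulk of the work then lies in establishing this point recurrence under the stated hypotheses. My observation is that the two conditions are exactly Foster--Lyapunov drift conditions, but not for $X$ itself; rather, they fit two extremal auxiliary Markov processes attached to the binary structure. Introducing the generator
\[
  \mathcal{G}^{\min} f(x) = c(x) f'(x) + B(x) \int_{(0,1/2]} (f(rx) - f(x))\, \varrho(x, \dd r)
\]
for the ``min-daughter'' process $X^{\min}$ (which at each fission always selects the smaller child $rx$ instead of the size-biased choice), a direct computation gives $\mathcal{G}^{\min}(x^{q_\infty})/x^{q_\infty}$ equal to the left-hand side of the first hypothesis, so that $V_\infty(x)=x^{q_\infty}$ is superharmonic for $X^{\min}$ on $[x_\infty,\infty)$ and $X^{\min}$ does not escape to $+\infty$. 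Symmetrically, the ``max-daughter'' process $X^{\max}$ admits $V_0(x)=x^{-q_0}$ as superharmonic on $(0,x_0]$ and does not escape to $0$.

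The main obstacle will be to transfer these non-escape properties from the extremal processes to $X$ itself. Applying $\mathcal{G}$ directly to $V_\infty$ yields
\[
  \frac{\mathcal{G} V_\infty(x)}{V_\infty(x)} = q_\infty \frac{c(x)}{x} + B(x) \int_{(0,1/2]}\bigl((1-r)^{q_\infty+1} + r^{q_\infty+1} - 1\bigr) \,\varrho(x, \dd r),
\]
and the elementary identity $(1-r)^{q+1} + r^{q+1} - r^{q} = (1-r)\bigl[(1-r)^{q} - r^{q}\bigr] \ge 0$ on $r \in (0,1/2]$ shows that the integrand above is pointwise no smaller than the proposition's $r^{q_\infty}-1$, so the stated hypothesis is strictly weaker than what a direct Lyapunov argument on $X$ would require. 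To bridge this gap I plan to use a pathwise coupling: drive $X$, $X^{\min}$, and $X^{\max}$ from a common underlying source of fission times and fission ratios, and exploit the sandwich $X^{\min} \leq X \leq X^{\max}$ immediately after every shared fission to propagate non-escape of $X^{\min}$ at $+\infty$ into non-escape of $X$ at $+\infty$, and dually for $X^{\max}$ and $0$. Combined with the standing irreducibility and aperiodicity \eqref{e:assump}, this gives point recurrence of $X$ and closes the proof via the first paragraph.
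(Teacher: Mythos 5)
Your identification of the Lyapunov structure is exactly right---$x^{q_\infty}$ is superharmonic for the ``always pick the smaller daughter'' dynamics above $x_\infty$, and $x^{-q_0}$ for the ``always pick the larger daughter'' dynamics below $x_0$---and you correctly compute that the hypotheses are strictly weaker than a drift condition for $X$ itself. But the bridge back to $X$ does not work, for two reasons. First, the coupling is not available: the fission rate $B(\cdot)$ and the kernel $\varrho(\cdot,\dd r)$ are state-dependent, so once $X$, $X^{\min}$ and $X^{\max}$ separate (which happens at the very first fission) there is no common source of ``fission times and ratios'' driving all three, and no order-preserving coupling is constructed. Second, and more decisively, even granting a pathwise sandwich $X^{\min}\le X\le X^{\max}$, the implication is inverted: to rule out $X\to+\infty$ you need an \emph{upper} bound that does not escape to $+\infty$, whereas your hypothesis controls $X^{\min}$, the lower bound, near $+\infty$; dually at $0$. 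So non-escape of $X^{\min}$ at $+\infty$ and of $X^{\max}$ at $0$ give no control on $X$ at all. In fact the hypotheses are genuinely too weak to yield recurrence of the size-biased process $X$---that is precisely the advertised improvement over the conditions recalled from \cite{BeFK} at the start of the section---so the strategy of proving point recurrence of $X$ cannot be completed.

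The repair is to abandon $X$ entirely. By the stopping-line many-to-one formula (Lemma \ref{L1'}) applied to $T=H(a)$ with $a\in(x_0,x_\infty)$, one has $L_{a,a}(0)=\EE_a[\Ee_{H(a)},H(a)<\infty]=\EE_a[\crochet{\Zz_{H(a)},\mathbf 1}]$, the expected number of individuals whose ancestral trajectory returns to $a$; hence it suffices that almost surely at least one line of descent returns to $a$ (strict inequality then follows because two or more returns occur with positive probability). The correct line of descent is the gluing of your two extremal selections at the level $a$: pick the smaller daughter when the current mass exceeds $a$ and the larger daughter when it is below $a$. This single Markov process coincides with your $X^{\min}$ on $(a,\infty)$ and with your $X^{\max}$ on $(0,a)$, so the two Lyapunov functions apply exactly where they are needed; since the process has only negative jumps, ruling out escape to $0$ and to $+\infty$ gives point recurrence at $a$. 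This is the route the paper takes.
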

\begin{proof}
Let $a \in (x_0,x_\infty)$.
By the definition of the Malthus exponent in Section 4 and the right-continuity of the function $L_{a,a}$, we see that $\lambda>0$ if and only if $L_{a,a}(0)\in(1,\infty]$. We thus have to check that
$$\EE_a\left[\Ee_{H(a)}, H(a)<\infty\right]>1,$$ 
that is, thanks to the many-to-one formula of Lemma \ref{L1'}, that 
$$\EE_a[\crochet{\Zz_{H(a)}, \mathbf{1}}]>1,$$
where $\mathbf{1}$ is the constant function with value $1$.
{\em A fortiori}, it suffices to  check that $\crochet{\Zz_{H(a)}, \mathbf{1}}\geq 1$ $\PP_a$-a.s., and that this inequality is strict with positive $\PP_a$-probability. In words, we freeze individuals 
 at their first return time to $a$; it is easy to construct an event with positive probability on which there are two or more frozen individuals, so we only need to verify that we get at least one frozen individual $\PP_a$-a.s.

In this direction, we focus on a specific ancestral trajectory, say  $X^*$, which is defined as follows. Recall that any trajectory is driven by the flow velocity \eqref{e:ode} between consecutive times of downward jumps, so we only need to explain how we select daughters at fission events. When a fission event occurs at a time $t$
with $X^*_{t-}=x^*>a$, producing two daughters, say $rx^*$ and $(1-r)x^*$ for some $r\in(0,1/2]$, then we choose the smallest daughter, i.e. $X^*_t=rx^*$, whereas if $x^*<a$ then we choose the largest daughter, i.e. $X^*_t=(1-r)x^*$.
The process $X^*$ is then Markovian with infinitesimal generator
$${\mathcal G}^*f(x)=c(x) f'(x) +B(x) \int_{(0,1/2]} (\Indic{x>a} f(rx) + \Indic{x<a}f((1-r)x) -f(x))\varrho(x,\dd r).$$

We now outline a proof that $X^*$ is point-recurrent.
Since the process has only negative jumps,
it is sufficient to show that $X^*_t \to \infty$ and $X^*_t \to 0$,
as $t\to\infty$, are both impossible.
For the former, consider starting the process at $x>x_\infty$
and killing it upon passage below $x_\infty$. Denote this process by
$X^\circledast$ and its generator by
$${\mathcal G}^\circledast f(x)=c(x) f'(x) +B(x) \int_{(0,1/2]} (\Indic{rx > x_\infty} f(rx) -f(x))\varrho(x,\dd r).$$
(The dependence on $a$ in the integral
vanishes since $x_\infty>a$.)
Now, let $V(x) = x^{q_\infty}$, for $x\ge x_\infty$.
The conditions in the statement imply that
$\mathcal{G}^\circledast V \le 0$, so
$V(X^\circledast)$ is a supermartingale. This ensures that
$X^\circledast$ cannot converge to $+\infty$, and indeed
the same for $X^*$ itself.
To show $X^*$ cannot converge to $0$, we start it at $x<x_0$
and kill it upon passing above $x_0$, and follow the same
argument with $V(x) = x^{-q_0}$.

To conclude, we have shown that
that $X^*$ is point-recurrent, and therefore $\PP_a$-almost surely hits $a$.
This shows that
$\PP_a[\crochet{\Zz_{H(a)}, \mathbf{1}}\geq 1]=1$, and completes the proof.
\end{proof}

We remark that a similar argument was carried out in Section 3.6 of \cite{BeFK},
using instead the Markov process $X$. The Markov process $X$
can be selected from the process $\mathbf{Z}$ by making a size-biased
pick from the offspring at each branching event; that is, from offspring
of sizes $rx$ and $(1-r)x$, following the former with probability $r$
and the latter with probability $1-r$. On the other hand, in the
process $X^*$ in the proof above, we pick from the offspring more carefully
in order to follow a line of descent which is more likely to stay close
to the point $a$. This accounts for the improvement in conditions
between \cite{BeFK} and this work.

\bibliography{Intrins}

\begin{thebibliography}{10}

\bibitem{AsHer}
S.~Asmussen and H.~Hering.
\newblock Strong limit theorems for general supercritical branching processes
  with applications to branching diffusions.
\newblock {\em Z. Wahrscheinlichkeitstheorie und Verw. Gebiete},
  36(3):195--212, 1976.

\bibitem{BCG-fine}
D.~Balagu\'e, J.~A. Ca\~nizo, and P.~Gabriel.
\newblock Fine asymptotics of profiles and relaxation to equilibrium for
  growth-fragmentation equations with variable drift rates.
\newblock {\em Kinet. Relat. Models}, 6(2):219--243, 2013.

\bibitem{Beresty}
J.~Berestycki.
\newblock Multifractal spectra of fragmentation processes.
\newblock {\em J. Statist. Phys.}, 113(3-4):411--430, 2003.

\bibitem{BBHHR}
J.~Berestycki, E.~Brunet, J.~W. Harris, S.~C. Harris, and M.~I. Roberts.
\newblock Growth rates of the population in a branching {B}rownian motion with
  an inhomogeneous breeding potential.
\newblock {\em Stochastic Process. Appl.}, 125(5):2096--2145, 2015.

\bibitem{BerGab2}
E.~Bernard and P.~Gabriel.
\newblock Asynchronous exponential growth of the growth-fragmentation equation
  with unbounded fragmentation rate.
\newblock \arxivref{1809.10974v2}{math.AP}, 2018.

\bibitem{Ber-frag}
J.~Bertoin.
\newblock {\em Random fragmentation and coagulation processes}, volume 102 of
  {\em Cambridge Studies in Advanced Mathematics}.
\newblock Cambridge University Press, Cambridge, 2006.

\bibitem{BeGF}
J.~Bertoin.
\newblock Markovian growth-fragmentation processes.
\newblock {\em Bernoulli}, 23(2):1082--1101, 2017.

\bibitem{BeFK}
J.~Bertoin.
\newblock On a {F}eynman-{K}ac approach to growth-fragmentation semigroups and
  their asymptotic behaviors.
\newblock \arxivref{1804.04905}{math.PR}, 2018.

\bibitem{BBCK}
J.~Bertoin, T.~Budd, N.~Curien, and I.~Kortchemski.
\newblock Martingales in self-similar growth-fragmentations and their
  connections with random planar maps.
\newblock {\em Probability Theory and Related Fields}, 172(3):663--724, Dec
  2018.

\bibitem{BeRou}
J.~Bertoin and A.~Rouault.
\newblock Discretization methods for homogeneous fragmentations.
\newblock {\em J. London Math. Soc. (2)}, 72(1):91--109, 2005.

\bibitem{BW}
J.~Bertoin and A.~R. Watson.
\newblock A probabilistic approach to spectral analysis of growth-fragmentation
  equations.
\newblock {\em Journal of Functional Analysis}, 274(8):2163 -- 2204, 2018.

\bibitem{Biggins92}
J.~D. Biggins.
\newblock Uniform convergence of martingales in the branching random walk.
\newblock {\em Ann. Probab.}, 20(1):137--151, 1992.

\bibitem{BigKyp}
J.~D. Biggins and A.~E. Kyprianou.
\newblock Measure change in multitype branching.
\newblock {\em Adv. in Appl. Probab.}, 36(2):544--581, 2004.

\bibitem{CalvoDoumicPerthame}
J.~Calvo, M.~Doumic, and B.~Perthame.
\newblock Long-time asymptotics for polymerization models.
\newblock {\em Communications in Mathematical Physics}, 363(1):111--137, Oct
  2018.

\bibitem{Cavalli}
B.~Cavalli.
\newblock On a family of critical growth-fragmentation semigroups and refracted
  {L}{\'e}vy processes.
\newblock \arxivref{1812.07951}{math.PR}, 2018.

\bibitem{ChVil}
N.~Champagnat and D.~Villemonais.
\newblock Exponential convergence to quasi-stationary distribution and
  {$Q$}-process.
\newblock {\em Probab. Theory Related Fields}, 164(1-2):243--283, 2016.

\bibitem{Cha-stop}
B.~Chauvin.
\newblock Product martingales and stopping lines for branching {B}rownian
  motion.
\newblock {\em Ann. Probab.}, 19(3):1195--1205, 1991.

\bibitem{CRY}
Z.-Q. Chen, Y.-X. Ren, and T.~Yang.
\newblock Law of large numbers for branching symmetric {H}unt processes with
  measure-valued branching rates.
\newblock {\em J. Theoret. Probab.}, 30(3):898--931, 2017.

\bibitem{ChYu}
Z.-Q. Chen and Y.~Shiozawa.
\newblock Limit theorems for branching {M}arkov processes.
\newblock {\em J. Funct. Anal.}, 250(2):374--399, 2007.

\bibitem{CHHK}
A.~Cox, S.~Harris, E.~Horton, and A.~Kyprianou.
\newblock Multi-species neutron transport equation.
\newblock \arxivref{1809.00827}{math.PR}, 2018.

\bibitem{Dadoun}
B.~Dadoun.
\newblock Asymptotics of self-similar growth-fragmentation processes.
\newblock {\em Electron. J. Probab.}, 22:Paper No. 27, 30, 2017.

\bibitem{DDGW}
T.~D\c{e}biec, M.~Doumic, P.~Gwiazda, and E.~Wiedemann.
\newblock Relative entropy method for measure solutions of the
  growth-fragmentation equation.
\newblock \arxivref{1609.02414v2}{math.AP}, 2018.

\bibitem{Delmoral}
P.~Del~Moral.
\newblock {\em Feynman-{K}ac formulae}.
\newblock Probability and its Applications (New York). Springer-Verlag, New
  York, 2004.
\newblock Genealogical and interacting particle systems with applications.

\bibitem{DoumGab}
M.~Doumic~Jauffret and P.~Gabriel.
\newblock Eigenelements of a general aggregation-fragmentation model.
\newblock {\em Math. Models Methods Appl. Sci.}, 20(5):757--783, 2010.

\bibitem{EN00}
K.-J. Engel and R.~Nagel, editors.
\newblock {\em One-parameter semigroups for linear evolution equations}, volume
  194 of {\em Graduate Texts in Mathematics}.
\newblock Springer-Verlag, New York, 2000.

\bibitem{EnHaKy}
J.~Engl\"{a}nder, S.~C. Harris, and A.~E. Kyprianou.
\newblock Strong law of large numbers for branching diffusions.
\newblock {\em Ann. Inst. Henri Poincar\'{e} Probab. Stat.}, 46(1):279--298,
  2010.

\bibitem{Esco}
M.~Escobedo.
\newblock On the non existence of non negative solutions to a critical
  growth-fragmentation equation.
\newblock \arxivref{1703.07602 }{math.AP}, 2017.

\bibitem{GiHaHa}
Y.~Git, J.~W. Harris, and S.~C. Harris.
\newblock Exponential growth rates in a typed branching diffusion.
\newblock {\em Ann. Appl. Probab.}, 17(2):609--653, 2007.

\bibitem{HaHa}
R.~Hardy and S.~C. Harris.
\newblock A spine approach to branching diffusions with applications to
  {$L^p$}-convergence of martingales.
\newblock In {\em S\'{e}minaire de {P}robabilit\'{e}s {XLII}}, volume 1979 of
  {\em Lecture Notes in Math.}, pages 281--330. Springer, Berlin, 2009.

\bibitem{HaHeKy}
S.~C. Harris, M.~Hesse, and A.~E. Kyprianou.
\newblock Branching {B}rownian motion in a strip: survival near criticality.
\newblock {\em Ann. Probab.}, 44(1):235--275, 2016.

\bibitem{HHK}
S.~C. Harris, E.~Horton, and A.~Kyprianou.
\newblock Stochastic methods for the neutron transport equation {II}: Almost
  sure growth.
\newblock \arxivref{1901.00220}{math.PR}, 2019.

\bibitem{HKV}
E.~Horton, A.~Kyprianou, and D.~Villemonais.
\newblock Stochastic methods for the neutron transport equation {I}: Linear
  semigroup asymptotics.
\newblock \arxivref{1810.01779}{math.PR}, 2018.

\bibitem{Jagers}
P.~Jagers.
\newblock General branching processes as {M}arkov fields.
\newblock {\em Stochastic Process. Appl.}, 32(2):183--212, 1989.

\bibitem{JN84}
P.~Jagers and O.~Nerman.
\newblock The growth and composition of branching populations.
\newblock {\em Adv. in Appl. Probab.}, 16(2):221--259, 1984.

\bibitem{KaP}
J.-P. Kahane and J.~Peyri\`ere.
\newblock Sur certaines martingales de {B}enoit {M}andelbrot.
\newblock {\em Advances in Math.}, 22(2):131--145, 1976.

\bibitem{MS16}
S.~Mischler and J.~Scher.
\newblock Spectral analysis of semigroups and growth-fragmentation equations.
\newblock {\em Ann. Inst. H. Poincar\'e Anal. Non Lin\'eaire}, 33(3):849--898,
  2016.

\bibitem{Nerman}
O.~Nerman.
\newblock On the convergence of supercritical general ({C}-{M}-{J}) branching
  processes.
\newblock {\em Z. Wahrsch. Verw. Gebiete}, 57(3):365--395, 1981.

\bibitem{Per07}
B.~Perthame.
\newblock {\em Transport equations in biology}.
\newblock Frontiers in Mathematics. Birkh\"auser Verlag, Basel, 2007.

\bibitem{Pey}
J.~Peyri\`ere.
\newblock Turbulence et dimension de {H}ausdorff.
\newblock {\em C. R. Acad. Sci. Paris S\'er. A}, 278:567--569, 1974.

\bibitem{Savits}
T.~H. Savits.
\newblock The explosion problem for branching {M}arkov process.
\newblock {\em Osaka J. Math.}, 6(2):375--395, 1969.

\bibitem{ShiQ-ou}
Q.~Shi.
\newblock A growth-fragmentation model related to {O}rnstein-{U}hlenbeck type
  processes.
\newblock Preprint, \arxivref{1702.01091v1}{math.PR}, 2017.

\bibitem{Shi}
Z.~Shi.
\newblock {\em Branching random walks}, volume 2151 of {\em Lecture Notes in
  Mathematics}.
\newblock Springer, Cham, 2015.

\bibitem{Shio}
Y.~Shiozawa.
\newblock Exponential growth of the numbers of particles for branching
  symmetric {$\alpha$}-stable processes.
\newblock {\em J. Math. Soc. Japan}, 60(1):75--116, 2008.

\end{thebibliography}
 \eject

\end{document}